\newtheorem{theorem}{Theorem}[section]
\newtheorem{corollary}[theorem]{Corollary}
\newtheorem{proposition}[theorem]{Proposition}
\theoremstyle{definition}
\newtheorem{definition}[theorem]{Definition}
\theoremstyle{remark}
\newtheorem{remark}[theorem]{\sc Remark}
\newtheorem{example}[theorem]{\sc Example}
\theoremstyle{claim}
\newcommand{\cod}{{\rm{cod\hspace{2pt}}}}
\newcommand{\Sing}{{\rm{Sing\hspace{2pt}}}}
\newcommand{\codSing}{{\rm{codSing\hspace{2pt}}}}
\begin{document}

\title[On flags of holomorphic foliations associated with singular second-order ode's]{On flags of holomorphic foliations associated with singular second-order ordinary differential equations}

\author[F. Lourenço]{Fernando Lourenço}
\address{Universidade Federal de Lavras - UFLA, Brazil}
\email{fernando.lourenco@ufla.br}

\author[F. Reis]{Fernando Reis}
\address{Universidade Federal do Espírito Santo - UFES, Brazil}
\email{fernando.reis@ufes.br}

\author[E. da Silva]{Euripedes da Silva}
\address{Instituto Federal do Ceará - IFCE, Brazil}
\email{euripedes.carvalho@ifce.edu.br}

\begin{abstract}
We consider germs of holomorphic vector fields at the origin of $\mathbb{C}^3$, with non-isolated singularities that are tangent to a holomorphic foliation of codimension one. This configuration is known as a $2$-flag of foliations. The focus is on cases where this geometric structure originates from second-order ordinary differential equations. We investigate the behavior of the singular sets associated with the foliations under consideration. Furthermore, we present a classification for second-order equations that admit a $2$-flag of foliations. Finally, we  propose a general method for constructing germs of $2$-flags of foliations at the origin of $\mathbb{C}^n$, with suitable properties of the singular sets, and we conclude by demonstrating that under generic assumptions, every equation of order greater than or equal to two is associated formally with a germ of 2-flag of holomorphic foliations at  $(\mathbb{C}^3,0)$.
\vspace{0.5 cm}

\noindent {\bf{Keywords:}} Holomorphic foliations, Flags, Vector fields, Non-isolated singularities, Second-order ordinary differential equations.
 
\end{abstract}

\maketitle

\section{Introduction}
 This paper presents an investigation of the theory of second-order ordinary differential equations of the form
  \begin{equation}\label{eq: second order1} u''(t)=\frac{P(u(t),u'(t),t)}{Q(u(t),u'(t),t)},
   \end{equation} 
where $P,Q:U\to \mathbb{C}$ are holomorphic functions defined on an open subset $U\subset \mathbb{C}^{3}$, i.e., $P,Q \in \mathcal{O}(U)$. This type of equation can be represented as a germ of a vector field at $ (\mathbb{C}^3,0)$, denoted by $X$, which admits a \textit{non-isolated singular set} and, it determines a germ of singular one-dimensional holomorphic foliation $\mathcal{F}_{1}$, such that its leaves projected to the graphs of the solutions of \eqref{eq: second order1} (see Section \ref{Section: ODEand2flags}).

Equation \eqref{eq: second order1} belongs to a large class of equations that plays a pivotal role in the theory of differential equations having a vast number of applications in physics, engineering, and sciences in general. It is also known as \textit{constrained} or \textit{implicit} systems (see, for instance, \cite{okaConst, bodadila, sotomayorImp} and the references cited therein). Hence,  the mathematical models considered are direct representatives of existing physical models.

Recently, equation \eqref{eq: second order1} (and the vector field $X$) was considered in \cite{artigo mayc} for the case where $P$ and $Q$ are polynomials. The author explores the one-dimensional foliation $\mathcal{F}_1$ associated with \eqref{eq: second order1} on a complex projective space and presents several significant results. For instance, that \eqref{eq: second order1} does not possess algebraic solutions when the bidegree is sufficiently large.
 
 In this work, we consider a special geometric behavior: the graphs of the solutions of \eqref{eq: second order1} lie in a family of surfaces. More precisely, we assume that the leaves of $\mathcal{F}_1$ are contained on submanifolds that are themselves leaves of a  holomorphic foliation of codimension-one,  $\mathcal{F}_2$.  This geometric structure is known as a \textit{flag of foliations}. It has been extensively studied in recent decades and can be applied to various contexts. We refer to \cite{MauResid, Cano2014,DA_Wil,Feigin,Ferr_Lour,DanubiaTh, flagsMol,Elvia,Reeb2}. We concentrate exclusively on flags of holomorphic foliations associated with equations of the form \eqref{eq: second order1}. Let us clarify what we mean by this association.

A \textit{$2$-flag of a second-order ordinary differential equation} (see Definition \ref{def: flag2ODE}) is a pair of germs of holomorphic foliations $\mathcal{F}=(\mathcal{F}_1,\mathcal{F}_2)$ such that there are an open subset $U\subset \mathbb{C}^3$, with $0\in U$ and a coordinate system $(x_1,x_2,x_3)$ in $U$, such that $\mathcal{F}_1$ can be defined by the vector field
\[
X=x_{2}Q(x_1,x_2,x_3)\frac{\partial}{\partial x_1 }+P(x_1,x_2,x_3)\frac{\partial}{\partial x_{2}}+ Q(x_1,x_2,x_3)\frac{\partial}{\partial x_3},
\]
and $\mathcal{F}_2$ can be defined by the integrable 1-form 
\[
\omega = \sum_{j=1}^{3}A_jdx_j,
\]
where $P,Q,A_j\in \mathcal{O}(U)$, such that $P(0)=Q(0)=A_j(0)=0$, with $P,Q\not \equiv 0$ and $P,Q$ without common factors (see Example \ref{ex: nonsing} and Remark \ref{remark: nonsing}).

One of the main tools in the theory of differential equations is the concept of \textit{first integrals}. In the context of the current work, a \textit{first integral} for a second-order ordinary differential equation is a nonconstant germ of a (holomorphic, meromorphic, formal, \dots) function $f:(\mathbb{C}^3,0)\to (\mathbb{C},0)$ which is constant on the leaves of the germ of one-dimensional holomorphic foliation $\mathcal{F}_1$. In other words, the graphs of the solutions are contained in the level sets of the first integral. The existence of first integrals is a classical problem, commonly referred to as the \textit{Integrability problem}. Readers who are not familiar with these notions and results are referred to the classical book \cite{Arnold 1}. 

It is worth noting that every second-order ordinary differential equation that has a first integral admits a $2$-flag of foliations (see Example \ref{remark: df(X)}). However, there are many classes of equations (or foliations) that do not have any kind of first integrals (for instance, see \cite{Poincare1}). More precisely, regarding the approach we are considering in the present work, we emphasize \cite{RebeloReis}, which presents germs of vector fields at $(\mathbb{C}^3,0)$ that admit a formal first integral but do not admit any holomorphic first integral. Furthermore, in \cite{PinheiroReis}, the authors explore an example of two topologically equivalent foliations, where only one of them admits two independent holomorphic first integrals.

In a certain sense, the existence of flags associated with differential equations can be considered a weaker version of the Integrability problem, since we find equations that have flags, regardless of the existence of first integrals. In this case,  the orbits of the vector field $X$ (and then, the graph of the solutions of \eqref{eq: second order1}) will be contained in the leaves of a holomorphic foliation of codimension-one, such that it is not necessarily the level sets of a function (i.e., a first integral). 

Throughout the text, we address this weak version of the Integrability problem for several particular second-order equations by exhibiting the $2$-flag of foliations associated with them (see Examples  \ref{example: linearHILL}, \ref{example: Emden-Fowler},  \ref{ex: log}, \ref{example: firstNONlinear}, \ref{example: corPropQuasilinear}, \ref{ex: sing totalderivative tang to regular fol}, \ref{ex: formISOLATEDsing},  \ref{ex: noinclusion}). Proposition \ref{Prop: Quasilinear ode} provides necessary conditions for a quasilinear ordinary differential equation to admit a flag. In Section \ref{Section: Obstructions}, we prove three fundamental obstructions for the existence of $2$-flags of second-order ordinary differential equations.

One way to understand flags of foliations is to analyze the structure of the singular sets involved and the relations among them. In this direction, 
R. Mol proved in \cite{flagsMol}, Theorem 1, that $\Sing(\mathcal{F}_2) $ is invariant by $\mathcal{F}_1$ for any $2$-flag of holomorphic foliations $\mathcal{F}=(\mathcal{F}_1,\mathcal{F}_2)$ on a complex manifold. Furthermore, in the case where $\Sing(\mathcal{F}_2)$  is an isolated set then $\Sing(\mathcal{F}_2)\subseteq \Sing(\mathcal{F}_1)$. The converse inclusion has been examined under different assumptions (see \cite{MauResid}  Corollary 2, p.1168). 

In general, the singular set of a flag can be very complicated. Our first main result establishes conditions where the above inclusion also holds for non-isolated singularities.

\begin{theorem}\label{Main result}
	Let $\mathcal{F} = (\mathcal{F}_1,\mathcal{F}_2)$ be a $2$-flag of a second-order ordinary differential equation. Then, $\Sing(\mathcal{F})=\Sing(\mathcal{F}_1)$. 
	More precisely,
		\begin{equation}\label{eq: inclusion11} \Sing(\mathcal{F}_2) \subseteq  \Sing(\mathcal{F}_1). \end{equation}
\end{theorem}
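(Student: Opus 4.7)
My plan is to use the explicit form of $X$ to put $\omega$ in a canonical decomposition, analyse $\Sing(\mathcal{F}_{2})$ pointwise, and then close the argument by combining Mol's invariance theorem with the integrability of $\omega$. First, the tangency $\omega(X)=0$ reads $A_{1}x_{2}Q+A_{2}P+A_{3}Q=0$, and since $\gcd(P,Q)=1$ in $\mathcal{O}_{\mathbb{C}^{3},0}$ the relation $A_{2}P\equiv 0\pmod{Q}$ forces $Q\mid A_{2}$, giving $A_{2}=-BQ$ for a unique $B\in\mathcal{O}(U)$; cancelling $Q$ yields $A_{3}=BP-A_{1}x_{2}$. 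This produces the canonical decomposition
\[
\omega \;=\; A_{1}\alpha+B\beta, \qquad \alpha := dx_{1}-x_{2}dx_{3}, \quad \beta := P\,dx_{3}-Q\,dx_{2},
\]
with $\alpha(X)=\beta(X)=0$, and a direct computation gives
\[
\alpha\wedge\beta \;=\; -Q\,dx_{1}\wedge dx_{2}+P\,dx_{1}\wedge dx_{3}-x_{2}Q\,dx_{2}\wedge dx_{3},
\]
which vanishes exactly on $\Sing(\mathcal{F}_{1})=\{P=Q=0\}$; hence $\alpha(p),\beta(p)$ are linearly independent for every $p\in U\setminus\Sing(\mathcal{F}_{1})$.

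Next, a short gcd check gives $\gcd(A_{1},A_{2},A_{3})=\gcd(A_{1},B)$, so after dividing $\omega$ by this common factor (which does not alter $\mathcal{F}_{2}$) I may assume $\gcd(A_{1},B)=1$. Analysing $\omega(p)=A_{1}(p)\alpha(p)+B(p)\beta(p)=0$ pointwise: on $\Sing(\mathcal{F}_{1})$, $\beta(p)=0$ reduces $\omega(p)=0$ to $A_{1}(p)=0$, a point already in $\Sing(\mathcal{F}_{1})$; outside $\Sing(\mathcal{F}_{1})$, linear independence of $\alpha(p),\beta(p)$ forces $A_{1}(p)=B(p)=0$. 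Thus the theorem reduces to establishing the inclusion $\{A_{1}=B=0\}\subseteq\{P=Q=0\}$.

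For this, I would argue by contradiction: suppose $p\in\{A_{1}=B=0\}\setminus\Sing(\mathcal{F}_{1})$; then $X(p)\neq 0$ and Mol's theorem forces the $X$-orbit $\gamma$ through $p$ to lie in $\{A_{1}=B=0\}$, yielding $X(A_{1})\equiv X(B)\equiv 0$ on $\gamma$. Expanding $\omega\wedge d\omega=0$ in the canonical decomposition gives the identity
\[
A_{1}X(B)-B\,X(A_{1}) \;=\; A_{1}^{2}-A_{1}B\,C_{1}-B^{2}C_{2},
\]
with $C_{1}:=\partial_{2}P+\partial_{3}Q+x_{2}\,\partial_{1}Q$ and $C_{2}:=Q\,\partial_{1}P-P\,\partial_{1}Q$; equivalently, $h:=A_{1}/B$ satisfies the Riccati equation $X(h)=-h^{2}+C_{1}h+C_{2}$. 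A careful analysis of the $2$-jet of this integrability identity at $p$, combining the linear independence of $dA_{1}|_{p}$ and $dB|_{p}$ (guaranteed by $\gcd(A_{1},B)=1$ at a smooth point of the intersection) with the Mol constraints $X(A_{1})(p)=X(B)(p)=0$ and the explicit flow-box structure of $X$, will force $P(p)=Q(p)=0$, giving the desired contradiction. The main obstacle is precisely this last step, where the contact-form structure of $\alpha$ and the specific horizontal form of $X$ must be invoked to turn the jet-level constraints into the pointwise vanishing of $P$ and $Q$.
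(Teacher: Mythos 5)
Your reduction is correct and, in substance, coincides with the paper's: writing $A_2=-BQ$ and $A_3=BP-x_2A_1$ is the same as observing that the paper's auxiliary field $Y=x_2A_2\frac{\partial}{\partial x_1}-(x_2A_1+A_3)\frac{\partial}{\partial x_2}+A_2\frac{\partial}{\partial x_3}$ equals $-BX$, your $B$ being exactly the ``common factor $h$'' in the paper's final case split. Your pointwise analysis of $\omega=A_1\alpha+B\beta$ correctly shows that the whole theorem is equivalent to the single inclusion $\{A_1=B=0\}\subseteq\{P=Q=0\}$ (and your integrability identity $A_1X(B)-BX(A_1)=A_1^2-A_1BC_1-B^2C_2$ checks out). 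When $B$ is a unit this set is empty and you are done, which is the paper's first case.

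The problem is that you do not prove the inclusion in the remaining case; you only announce that ``a careful analysis of the $2$-jet'' at a point $p\in\{A_1=B=0\}\setminus\{P=Q=0\}$ ``will force $P(p)=Q(p)=0$.'' This is a genuine gap, and the proposed route cannot work as stated, because the obstruction is not local at a regular point of $X$. Indeed, at order $0$ and $1$ your Riccati identity gives nothing beyond Mol's invariance (the right-hand side is quadratic in $(A_1,B)$, so its $1$-jet at $p$ vanishes and the $1$-jet of the left-hand side only yields $X(A_1)(p)=X(B)(p)=0$); and in a flow box $X=\frac{\partial}{\partial y_3}$ the form $\omega=y_1\,dy_1+y_2\,dy_2$ is integrable, annihilates $X$, and is singular precisely along a regular orbit --- so no amount of jet data of the tangency-plus-integrability equations at a single regular point can force $P(p)=Q(p)=0$. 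Any correct completion must exploit the germ globally (e.g.\ how the putative singular orbit accumulates on $\Sing(\mathcal{F}_1)$ or on the origin) or the normal form of $X$ in an essentially non-local way. A secondary flaw: the parenthetical claim that $\gcd(A_1,B)=1$ guarantees linear independence of $dA_1|_p$ and $dB|_p$ at a smooth point of $\{A_1=B=0\}$ is false (take $A_1=y_1^2-y_2^3$, $B=y_2$). For comparison, the paper closes this case by passing to the reduced representative $Z$ of $\mathcal{F}_1$ after dividing out the common factor and asserting the inclusion of singular sets directly; your honest isolation of the residual claim $\{A_1=B=0\}\subseteq\{P=Q=0\}$ pinpoints exactly where the real content of the theorem lies, but as written your argument does not establish it.
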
	

In Example \ref{ex: noinclusion}, we exhibit a $2$-flag of holomorphic foliations, which is not a 2-flag of a second-order ordinary differential equation, and the inclusion \eqref{eq: inclusion11} does not hold. Therefore, the condition that ensures the inclusion of the singular sets depends on the presence of the differential equation.

We also obtain the following classification.

	\begin{theorem}	\label{theo: B}
		Consider the equation
\begin{equation}\label{eq: n-order12}
	u''(t) = \frac{P(u(t),u'(t),t)}{Q(u(t),u'(t),t)}
\end{equation}
where $P$ and $Q$ are holomorphic functions, without common factors, on an open subset $U\subset \mathbb{C}^3$ such that $P(0)=Q(0)=0$, $P,Q\not\equiv 0$. Suppose that \eqref{eq: n-order12} admits a $2$-flag of a second-order ordinary differential equation $(\mathcal{F}_1,\mathcal{F}_2)$, such that $\codSing(\mathcal{F}_2)\neq 2$. Then, \eqref{eq: n-order12} can be reduced to a first-order ordinary differential equation. More precisely, we have only one of the following possibilities:
	\\
	
	\begin{itemize}
		\item[i)]  there exists a function $F\in \mathcal{O}(U)$  with an isolated critical point, such that \eqref{eq: n-order12} can be reduced to the (possibly implicit) first-order ordinary differential equation given by
		\[
		F(u(t),u'(t),t)=0.
		\]

		\item[ii)]  there is an analytic coordinate system that transforms \eqref{eq: n-order12} into a first-order ordinary differential equation, which can be written as
		\[
		y'(x) =\frac{f(x,y(x),\alpha)}{g(x,y(x),\alpha)},	
		\]
		for each $\alpha$, where  $f,g\in \mathcal{O}(U)$.
	\end{itemize}
\end{theorem}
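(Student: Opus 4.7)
The strategy is to use the hypothesis $\codSing(\mathcal{F}_2)\neq 2$ to force $\mathcal{F}_2$ into one of two very restricted geometric situations, apply a classical integrability theorem in each case to produce a holomorphic first integral of $\mathcal{F}_2$, and then use the flag inclusion $\mathcal{F}_1\subseteq\mathcal{F}_2$ to lift this first integral to $\mathcal{F}_1$, thereby reducing the order of the equation.

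First I would normalize the $1$-form $\omega$ defining $\mathcal{F}_2$ by dividing out common factors of its coefficients, so that $\codSing(\mathcal{F}_2)\geq 2$. Together with the hypothesis, this leaves only two possibilities in $\mathbb{C}^3$: either $\omega(0)\neq 0$, in which case $\mathcal{F}_2$ is regular at the origin, or $\omega(0)=0$ with $\Sing(\mathcal{F}_2)=\{0\}$. In both alternatives $\mathcal{F}_2$ admits a holomorphic first integral $F$ on a neighborhood of $0$: by the Frobenius theorem in the regular case (with $F$ a submersion), and by Malgrange's theorem on integrable holomorphic $1$-forms with singular set of codimension at least three in the isolated-singularity case, which yields a factorization $\omega=h\,dF$ with holomorphic germs $h,F$. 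The flag condition $\mathcal{F}_1\subseteq\mathcal{F}_2$ implies $X(F)=0$, so $F$ is constant along the graph $(u(t),u'(t),t)$ of every solution of \eqref{eq: n-order12}, and one already obtains the first-order implicit relation $F(u,u',t)=c$.

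To refine this into cases (i) and (ii): in the isolated-singularity case I would observe that the factor $h$ in $\omega=h\,dF$ is automatically a unit. Indeed, at every point $p\neq 0$ one has $\omega(p)\neq 0$, which forces $h(p)\neq 0$; since the zero set of a non-identically-zero holomorphic function is either empty or a hypersurface (codimension one in $\mathbb{C}^3$), it cannot be contained in the single point $\{0\}$ (codimension three), so $h$ is non-vanishing. Therefore $dF=h^{-1}\omega$ inherits the isolated zero at $0$ from $\omega$, and $F$ has an isolated critical point at $0$; this is case (i). In the regular case, $F$ is itself a submersion, so I complete it to an analytic coordinate system $(x,y,\alpha)$ with $\alpha=F$; the inclusion $\mathcal{F}_1\subseteq\mathcal{F}_2=\{d\alpha=0\}$ forces the vector field $\widetilde X$ defining $\mathcal{F}_1$ in the new coordinates to have vanishing $\partial_\alpha$-component, so $\widetilde X=g(x,y,\alpha)\,\partial_x+f(x,y,\alpha)\,\partial_y$, and eliminating $t$ between $x'=g$ and $y'=f$ yields $y'(x)=f(x,y,\alpha)/g(x,y,\alpha)$ for each fixed value of the parameter $\alpha$, which is exactly case (ii).

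The most delicate step I anticipate is the unit-factor argument for $h$ in case (i): although short, it is the pivot that upgrades Malgrange's abstract first integral to one with an isolated critical point, as required in the conclusion. A secondary point to verify in case (ii) is that $g(x,y,\alpha)$ is not identically zero, so that the expression $y'=f/g$ is meaningful; this should follow from the hypothesis $Q\not\equiv 0$ in the definition of a $2$-flag of a second-order ODE, which guarantees that $\widetilde X$ is not purely tangent to the $\partial_y$-direction.
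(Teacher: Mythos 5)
Your proposal follows essentially the same route as the paper: the hypothesis $\codSing(\mathcal{F}_2)\neq 2$ together with reducedness splits into $\Sing(\mathcal{F}_2)=\emptyset$ (handled by Frobenius/flow-box, straightening $\omega$ to $h\,dy_3$, killing the third component of $X$ via $\omega(X)=0$, and reading off the family of planar equations $y'=f/g$) and $\Sing(\mathcal{F}_2)=\{0\}$ (handled by Malgrange's theorem). In the isolated case you argue more directly than the paper: you observe that the unit factor $h$ in $\omega=h\,dF$ forces $dF(X)=0$, so $F$ is a first integral with an isolated critical point, whereas the paper takes a detour through an auxiliary vector field $Y$ built from the partials of $F$ and invokes its uniqueness result (Proposition \ref{thm 10.1}) to identify $\mathcal{F}_1$ with $\mathcal{F}_Y$. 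Your shortcut is valid and arguably cleaner; your unit-factor argument for $h$ (its zero set would be a hypersurface contained in $\{0\}$) is correct, and in fact this non-vanishing is already part of Malgrange's statement.

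There is, however, one genuine omission: the theorem asserts that \emph{only one} of the possibilities (i), (ii) can occur, and your dichotomy on $\Sing(\mathcal{F}_2)$ is exclusive only for the \emph{given} $\mathcal{F}_2$. A priori the same equation, i.e.\ the same $\mathcal{F}_1$, could admit a second flag $(\mathcal{F}_1,\mathcal{F}_3)$ with $\Sing(\mathcal{F}_3)=\{0\}$ while $\mathcal{F}_2$ is regular, in which case both conclusions would hold simultaneously. The paper closes this loophole with a separate argument: writing $\mathcal{F}_1$ as $X=f_1\partial_{y_1}+f_2\partial_{y_2}$ in the straightening coordinates of the regular $\mathcal{F}_2$ and $\mathcal{F}_3$ as $\eta=G\,dF$ with $\Sing(dF)=\{0\}$, the tangency $dF(X)\equiv 0$ forces $X$ to be proportional to the Hamiltonian-type field $\frac{\partial F}{\partial y_2}\partial_{y_1}-\frac{\partial F}{\partial y_1}\partial_{y_2}$, whose singular set is $\{0\}$, contradicting the fact that $\Sing(\mathcal{F}_1)=\{P=Q=0\}$ is a nonempty non-isolated (codimension-two) set. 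You would need to add an argument of this kind to obtain the full statement.
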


Theorem \ref{theo: B} can be reformulated in terms of foliations.

\begin{theorem}\label{theo: Fol}
Let $\mathcal{F} = (\mathcal{F}_1,\mathcal{F}_2)$ be a $2$-flag of a second-order ordinary differential equation where $\cod(\Sing(\mathcal{F}_2))\neq 2$. Then, either $\mathcal{F}_1$ is a deformation of a (not necessarily integrable) 1-form $\omega = f(x,y,\alpha)dx - g(x,y,\alpha)dy$ on $(\mathbb{C}^2,0)\times (\mathbb{C},0)$ or $\mathcal{F}_1$ has a holomorphic first integral with an isolated critical point, where both cases cannot occur simultaneously.
\end{theorem}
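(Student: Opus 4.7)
The plan is to derive Theorem \ref{theo: Fol} as essentially a direct reformulation of Theorem \ref{theo: B} in foliation-theoretic language, with the exclusivity of the two alternatives reduced to a short dimensional argument. The key observation behind the translation is that, by Definition \ref{def: flag2ODE}, the foliation $\mathcal{F}_1$ is generated by the vector field
\[
X = x_2 Q \frac{\partial}{\partial x_1} + P \frac{\partial}{\partial x_2} + Q \frac{\partial}{\partial x_3},
\]
whose integral curves are (up to reparameterization) the graphs $(u(t), u'(t), t)$ of the solutions of $u'' = P/Q$. Consequently, a holomorphic function on $U$ is a first integral of $\mathcal{F}_1$ if and only if it is constant on every such solution graph.

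The two alternatives in Theorem \ref{theo: Fol} then arise from the two cases of Theorem \ref{theo: B}. In case (i), the function $F \in \mathcal{O}(U)$ has an isolated critical point, and the reduction of the second-order equation to the implicit first-order equation $F(u,u',t)=0$ means that $F$ is constant along every solution graph; hence $X(F)\equiv 0$, so that $F$ is a holomorphic first integral of $\mathcal{F}_1$ with an isolated critical point, giving the second alternative. In case (ii), the analytic change of coordinates provided by the theorem puts the equation in the form $y'(x) = f(x,y,\alpha)/g(x,y,\alpha)$ for each fixed $\alpha$; in those new coordinates $\mathcal{F}_1$ is generated by $g(x,y,\alpha)\frac{\partial}{\partial x} + f(x,y,\alpha)\frac{\partial}{\partial y}$, which is tangent to every slice $\{\alpha = \text{const}\}$. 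Equivalently, $\mathcal{F}_1$ is a one-parameter deformation of the (not necessarily integrable) 1-form $\omega = f(x,y,\alpha)\,dx - g(x,y,\alpha)\,dy$ on $(\mathbb{C}^2,0)$, which is the first alternative.

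The step I expect to be most delicate is the exclusivity, although in the end it reduces to the following short argument. Assume both alternatives hold simultaneously: $\mathcal{F}_1$ admits a first integral $F$ with an isolated critical point, and also the coordinate $\alpha$ (a submersion on $(\mathbb{C}^3,0)$) is a first integral, because $\mathcal{F}_1$ is tangent to every slice $\{\alpha = \text{const}\}$. Since $\mathcal{F}_1$ is one-dimensional, any two of its first integrals must be functionally dependent, so $dF \wedge d\alpha \equiv 0$ and locally $F = h(\alpha)$ for some holomorphic one-variable germ $h$. Then $dF = h'(\alpha)\,d\alpha$, and the critical set of $F$ equals $\alpha^{-1}(\{h'=0\})$, which is either empty, all of $(\mathbb{C}^3,0)$, or a union of complex surfaces $\{\alpha = \alpha_i\}$. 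None of these options is a nonempty zero-dimensional set, contradicting the isolated critical point hypothesis and thereby completing the argument.
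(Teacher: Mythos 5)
Your translation of cases (i) and (ii) of Theorem \ref{theo: B} into the two alternatives of Theorem \ref{theo: Fol} matches the paper, which states Theorem \ref{theo: Fol} as a reformulation and carries out all the work inside the proof of Theorem \ref{theo: B}. The genuine gap is in your exclusivity argument. The claim that ``since $\mathcal{F}_1$ is one-dimensional, any two of its first integrals must be functionally dependent'' is false in ambient dimension three: a one-dimensional foliation of $(\mathbb{C}^3,0)$ has leaves of dimension one, so near a regular point it can admit \emph{two} functionally independent first integrals. This is exactly the notion of a completely integrable system recalled in Example \ref{ex: twofirstInt}, where the vector field $\partial/\partial y_3$ admits the independent first integrals $y_1$ and $y_2$. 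Consequently $dF\wedge d\alpha\equiv 0$ does not follow, $F$ need not factor through $\alpha$, and the contradiction you aim for never materializes. (A smaller imprecision: in case (i) the reason $F$ is a first integral is that $\omega=G\,dF$ with $\omega(X)\equiv 0$ and $G\neq 0$, not the mere statement that solutions satisfy $F(u,u',t)=0$.)

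The paper's exclusivity argument is of a different nature and uses the second-order ODE structure in an essential way. In the coordinates where $\mathcal{F}_2$ is regular one has $X=f_1\,\partial/\partial y_1+f_2\,\partial/\partial y_2$; if $\mathcal{F}_1$ were also tangent to a foliation defined by $G\,dF$ with $\Sing(dF)=\{0\}$, then $f_1\frac{\partial F}{\partial y_1}+f_2\frac{\partial F}{\partial y_2}\equiv 0$, so $X$ is proportional to $Y=\frac{\partial F}{\partial y_2}\,\partial/\partial y_1-\frac{\partial F}{\partial y_1}\,\partial/\partial y_2$, whose singular set is $\{0\}$, hence $\mathcal{F}_1$ would have an isolated singularity. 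But by Remark \ref{remark: nonsing} one has $\Sing(\mathcal{F}_1)=\{P=0\}\cap\{Q=0\}$, a nonempty codimension-two analytic set in $\mathbb{C}^3$, i.e.\ a curve, which is non-isolated. That is the contradiction, and it cannot be reached by a dimensional count on first integrals alone; to close the exclusivity step you need an argument of this kind, exploiting the positive-dimensionality of $\Sing(\mathcal{F}_1)$.
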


Additionally, it is worth mentioning that a construction used in the proof of Theorem \ref{Main result} can be extended to 2-flags of foliations in spaces of any dimension. This extension enables us to construct germs of $2$-flags of foliations from a generic codimension-one foliation such that the inclusion \eqref{eq: inclusion11} remains valid. This generalization is only possible due to the structure of the differential equation, which means that the one-dimensional foliation in the flag is associated with an ordinary differential equation of order greater than or equal to two.

\begin{corollary}\label{Corollary: 01}
	Let  $\mathcal{G}$ be a germ of  holomorphic foliation of codimension one  at $(\mathbb{C}^n,0)$. Then there is a germ of one-dimensional holomorphic foliation $\mathcal{F}$, with $\cod(\Sing(\mathcal{F}))=2$, where $ (\mathcal{F},\mathcal{G})$ is a germ of $2$-flag of holomorphic foliations at $(\mathbb{C}^n,0)$ such that $\Sing(\mathcal{G}) \subseteq \Sing(\mathcal{F}).$  
\end{corollary}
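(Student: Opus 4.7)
The plan is to mimic, in $(\mathbb{C}^n,0)$, the construction from the proof of Theorem \ref{Main result}, where the vector field $X=x_2Q\,\partial_{x_1}+P\,\partial_{x_2}+Q\,\partial_{x_3}$ realised a $2$-flag of a second-order ODE, by writing the analogous jet-space vector field associated with an ordinary differential equation of order $n-1$. Fix coordinates $(x_1,\ldots,x_n)$ in which $\mathcal{G}$ is defined by a germ of integrable holomorphic $1$-form $\omega=\sum_{j=1}^{n}A_j\,dx_j$ with $\cod(\Sing(\omega))\ge 2$. After permuting the first $n-1$ coordinates we may assume $A_{n-1}\not\equiv 0$.

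Setting $Q:=A_{n-1}$ and $P:=-\bigl(\sum_{j=1}^{n-2}x_{j+1}A_j+A_n\bigr)$, define
\[
X\;=\;A_{n-1}\sum_{j=1}^{n-2}x_{j+1}\frac{\partial}{\partial x_j}\;+\;P\,\frac{\partial}{\partial x_{n-1}}\;+\;A_{n-1}\frac{\partial}{\partial x_n}.
\]
A direct substitution yields $\omega(X)=A_{n-1}\bigl(\sum_{j=1}^{n-2}x_{j+1}A_j+P+A_n\bigr)\equiv 0$, so $X$ is tangent to $\mathcal{G}$ and the one-dimensional foliation $\mathcal{F}$ it induces makes $(\mathcal{F},\mathcal{G})$ into a $2$-flag. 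The zero locus of $X$ equals $V(A_{n-1})\cap V(P)$, and since every $A_j$ vanishes on $\Sing(\mathcal{G})$, both $A_{n-1}$ and $P$ vanish there; hence $\Sing(\mathcal{G})\subseteq V(X)$.

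The main obstacle is upgrading this last inclusion to $\Sing(\mathcal{G})\subseteq\Sing(\mathcal{F})$ after saturating $X$, while simultaneously forcing $\cod(\Sing(\mathcal{F}))=2$. A common factor of the components of $X$ must divide both $A_{n-1}$ and $P$, so the crux is to arrange that these two germs are coprime. The existence of a nontrivial common irreducible factor of $A_{n-1}$ and $\sum_{j=1}^{n-2}x_{j+1}A_j+A_n$ is a closed algebraic condition on the finite-dimensional space of linear coordinate changes, and hence is avoided by a generic such change. With $A_{n-1}$ and $P$ coprime, $X$ is already saturated, $\Sing(\mathcal{F})=V(A_{n-1})\cap V(P)$ has codimension exactly two in $(\mathbb{C}^n,0)$, and the inclusion $\Sing(\mathcal{G})\subseteq\Sing(\mathcal{F})$ persists, yielding the $2$-flag that the corollary requires.
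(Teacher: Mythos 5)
Your vector field $X$ coincides with the one in the paper's own proof, and your verifications that $\omega(X)\equiv 0$ and that $\Sing(\mathcal{G})\subseteq V(A_{n-1})\cap V(P)=V(X)$ are correct (and in fact more explicit than the paper's on the latter point). The genuine gap is in your treatment of the case where $A_{n-1}$ and $P=-\bigl(\sum_{j=1}^{n-2}x_{j+1}A_j+A_n\bigr)$ share a common factor. You dispose of it by asserting that a generic linear change of coordinates renders the two germs coprime, on the grounds that possessing a common irreducible factor is ``a closed algebraic condition'' on the space of linear coordinate changes. This is not established: coprimality of two germs of holomorphic functions is not visibly cut out by finitely many polynomial equations in the entries of the linear map (the putative common factor ranges over an infinite-dimensional space, and you give no Weierstrass-preparation or resultant reduction to a finite-dimensional condition), and even granting closedness, a genericity argument requires exhibiting at least one coordinate system in which coprimality actually holds, i.e.\ showing the bad locus is proper. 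You never do this, and it is not obvious, because $A_{n-1}$ and $\sum x_{j+1}A_j+A_n$ transform in a coupled way under a coordinate change (the auxiliary vector field $\sum_{j=1}^{n-2} x_{j+1}\partial/\partial x_j+\partial/\partial x_n$ is itself coordinate-dependent). Note also that your preliminary reduction ``permute the first $n-1$ coordinates so that $A_{n-1}\not\equiv 0$'' can fail, e.g.\ when $\omega$ is a unit multiple of $dx_n$.

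The paper sidesteps all of this by saturating instead of changing coordinates: if $f$ is the common factor, it writes $A_{n-1}=fB_{n-1}$ and $A_n+\sum_{j=1}^{n-2}x_{j+1}A_j=fB_n$ and takes $\mathcal{F}$ to be defined by $Y=X/f$, which still satisfies $\omega(Y)\equiv 0$ and whose singular set is $V(B_{n-1})\cap V(B_n)$, of codimension two by coprimality of $B_{n-1}$ and $B_n$. That step is always available and needs no genericity hypothesis, so it is the fix you should adopt. (Strictly speaking, one must still check that $\Sing(\mathcal{G})$ remains inside $V(B_{n-1})\cap V(B_n)$ after the division --- a point the paper itself leaves implicit --- but that is a separate and smaller issue than the unsupported genericity claim on which your argument rests.)
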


On the other hand, although the hypothesis of associating a second-order ordinary differential equation with flags of foliations in three-dimensional space may seem highly restrictive, our last result points towards the direction that the higher-dimensional cases (and therefore, equations of higher-order) can be formally reduced to the case we are considering.  More precisely, as a consequence of Theorems  1.6 and 1.9 in \cite{bodadila}, regarding the classification of germs of vector fields with non-isolated singularities, we conclude this work with the following:

\begin{corollary}\label{Corollary: 02}
Consider the equation
	\begin{equation}\label{eq: n-order}
		u^{(n-1)}(t) = \frac{P(u(t),u'(t),\dots, u^{(n-2)},t)}{Q(u(t),u'(t),\dots, u^{(n-2)},t)}
	\end{equation}
	such that $n\geq 3$, $P$ and $Q$ are holomorphic functions, without common factors, on an open subset $U\subset \mathbb{C}^n$ such that $P(0)=Q(0)=0$, $P,Q\not\equiv 0$, and $D_{n-1}P(0) = -D_n Q(0)$ where $D_j$ represents the derivative with respect to the $j$th variable. Then, there is a formal transformation which reduces \eqref{eq: n-order} to a system of ordinary differential equations associated with a germ of $2$-flag of holomorphic foliations, $(\mathcal{F}_1,\mathcal{F}_2) $ at $(\mathbb{C}^3,0)$, where $\mathcal{F}_1$ can be determined by the vector field
	\[
	Y =  -x_1\frac{\partial}{\partial x_1} + x_2\frac{\partial}{\partial x_2}+x_1x_2\frac{\partial}{\partial x_3},
	\]  
	and $\mathcal{F}_2$ by the integrable 1-form
	\[
	\Omega = -x_2(x_1x_2-x_3)dx_1+x_1x_3 dx_2-x_1x_2dx_3,
	\]
in an analytic coordinate system $(x_1,x_2,x_3)$ on a representative open set $V\subset \mathbb{C}^3$.	
\end{corollary}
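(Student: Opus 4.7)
The plan is to translate equation \eqref{eq: n-order} into a holomorphic vector field on $(\mathbb{C}^n,0)$, invoke the non-isolated singularity classification of \cite{bodadila} to obtain a formal reduction to a three-dimensional model, and then verify directly that the model vector field $Y$ together with $\Omega$ constitutes a $2$-flag.

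First, introduce the phase-space coordinates $x_i = u^{(i-1)}(t)$ for $i=1,\dots,n-1$ and $x_n = t$. Multiplying through by $Q$ to clear the denominator converts \eqref{eq: n-order} into an equivalent first-order system whose integral curves are governed by the germ of holomorphic vector field
\[
X = x_2 Q\,\partial_{x_1} + x_3 Q\,\partial_{x_2} + \cdots + x_{n-1}Q\,\partial_{x_{n-2}} + P\,\partial_{x_{n-1}} + Q\,\partial_{x_n}
\]
on $(\mathbb{C}^n,0)$. Since $P$ and $Q$ share no common factor, $\Sing(X) = \{P = Q = 0\}$ is of pure codimension two. Each of the first $n-2$ components of $X$ is divisible by both $Q$ and by a vanishing coordinate $x_{i+1}$, so those rows of $DX(0)$ vanish identically; expanding the characteristic polynomial along them shows that the spectrum of $DX(0)$ consists of $n-2$ copies of $0$ together with the two eigenvalues of the $2\times 2$ block
\[
M = \begin{pmatrix} D_{n-1} P(0) & D_n P(0) \\ D_{n-1} Q(0) & D_n Q(0) \end{pmatrix}.
\]
The hypothesis $D_{n-1}P(0) = -D_n Q(0)$ forces $\operatorname{tr} M = 0$, placing $X$ in the trace-zero, codimension-two non-isolated class whose formal classification is carried out in \cite{bodadila}.

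Applying Theorems 1.6 and 1.9 of \cite{bodadila} then produces a formal change of coordinates reducing $X$, up to a unit multiple, to the three-dimensional model
\[
Y = -x_1\partial_{x_1} + x_2\partial_{x_2} + x_1 x_2\,\partial_{x_3}
\]
on $(\mathbb{C}^3,0)$. The corresponding three-component first-order system is the reduction asserted in the statement, and the one-dimensional foliation $\mathcal{F}_1$ it defines is generated by $Y$.

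It remains to check that $\Omega = -x_2(x_1x_2 - x_3)\,dx_1 + x_1 x_3\,dx_2 - x_1 x_2\,dx_3$ defines a codimension-one foliation $\mathcal{F}_2$ tangent to $\mathcal{F}_1$. A direct expansion yields
\[
\iota_Y\Omega = x_1 x_2(x_1 x_2 - x_3) + x_1 x_2 x_3 - x_1^2 x_2^2 = 0,
\]
while from $d\Omega = 2 x_1 x_2\,dx_1\wedge dx_2 - 2 x_2\,dx_1\wedge dx_3 - 2 x_1\,dx_2\wedge dx_3$ a short calculation of the coefficient of $dx_1\wedge dx_2\wedge dx_3$ gives $\Omega \wedge d\Omega = 0$. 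Hence $(\mathcal{F}_1,\mathcal{F}_2)$ is a $2$-flag at $(\mathbb{C}^3,0)$. I expect the principal obstacle in this argument to lie in the appeal to \cite{bodadila}: one must verify that the stated hypothesis $D_{n-1}P(0) = -D_n Q(0)$, together with whatever implicit genericity (for instance $\det M \neq 0$) is required by Theorems 1.6 and 1.9, isolates precisely the class whose formal normal form coincides with the $Y$ displayed above, rather than with a neighboring model in their list.
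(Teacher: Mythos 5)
Your proposal follows essentially the same route as the paper: form the phase-space vector field on $(\mathbb{C}^n,0)$, observe that $DX(0)$ has $n-2$ zero eigenvalues plus the two eigenvalues of the $2\times 2$ block whose trace vanishes by hypothesis (the paper phrases this as $\lambda_1/\lambda_2=-1$), invoke Theorems 1.6 and 1.9 of the Ortiz-Bobadilla--Rosales-Gonz\'alez--Voronin classification to reach the normal form $Y$ (the paper inserts an explicit projection $\pi:\mathbb{C}^n\to\mathbb{C}^3$ of the $n$-dimensional model, which you elide), and verify $\iota_Y\Omega=0$ and $\Omega\wedge d\Omega=0$ by direct computation. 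The caveat you raise about needing $\det M\neq 0$ for the eigenvalues to be nonzero is well taken, but the paper's own proof carries the same implicit genericity assumption, so this is not a defect of your argument relative to theirs.
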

It is worth noting that the vector field $Y$, as presented in Corollary \ref{Corollary: 02}, was obtained in \cite{bodadila}.

\section{Preliminaries}\label{Section: pre}
In order to make the text as self-contained as possible, this section presents some basic notions about holomorphic foliations. For more details on the general theory of foliation (for instance, for the real case), we can cite \cite{Camacho1,Godbillon,Ily_Yako, ScarduaBook2021} and the several references therein.

\subsection{Holomorphic foliations (nonsingular case)} Let $M$ be a complex manifold of dimension $n$. A holomorphic foliation of codimension $ q \leq n$ of $M$ is a decomposition $\mathcal{F}$ of $M$ consisting of pairwise disjoint immersed complex submanifolds (called leaves) of dimension $n-q$, distributed as described below: for each point $x\in M$ there is a unique submanifold $L_x$ of the decomposition that passes by $x$ ($L_x$ is called the leaf through $x$). Furthermore, there exists a holomorphic chart $(\varphi,U)$ of $M$, with $x\in U$, $\varphi:U\to \varphi(U) \subset \mathbb{C}^n$, such that $\varphi(U)=D^{n-q}\times D^{q}$, where $D^{n-q}, D^{q}$ are open polydiscs in $\mathbb{C}^{n-q}$ and $\mathbb{C}^q$, respectively. If $L$ is a leaf with $L\cap U \neq \emptyset$, then $L\cap U = \cup_{y\in \Gamma_{L,U}} \varphi^{-1}(D^{n-q}\times \{y\})$, where $\Gamma_{L,U}$ is a countable subset of $D^{q}$.

There are another (equivalent) definitions of foliations (see Chapter 1 and Proposition 3.1.4 in \cite{ScarduaBook2021} for a detailed discussion). 
\subsection{The integrability condition}
A distribution of $k$-planes on a manifold $M$ is a map $D$ which associates to each point $x \in M$  a vector subspace of dimension $k$ of $T_{x}M$. This means that for every $q \in M$ there exist $k$-holomorphic vector fields $X_{1}, \ldots , X_{k}$ on $M$, such that $\{ X_{1}(x), \ldots , X_{k}(x)\}$ is a base for $D(x)$. One says that a distribution of $k$-planes $D$ is involutive if, given two vector fields $X$ and $Y$ on $U$ such that, for each $ x \in U$, $X(x)$ and $Y(x) \in D(x),$ then $[X,Y](x) \in D(x).$ 
Frobenius Theorem (\cite{Camacho1,Godbillon,ScarduaBook2021}) assures that if the distribution $D$ is involutive then $D$ defines a foliation on $M$ of  dimension $k$.

Now, let $\omega_{1},\ldots, \omega_{q} \in \Omega^{1}(M)$ be linearly independent   holomorphic $1$-forms on $M$. Then we have a corresponding distribution $D$ of $(n-q)$-dimensional planes given by
\[
D(x) = \{v\in T_pM, \omega_j(x)\cdot v =0,j=1,\dots,q\}
\]
for each $x\in M$.
We say that the system  $\omega_{1},\ldots, \omega_{q}$ is integrable if it satisfies
\begin{equation*}
	d\omega_{j} \wedge \omega = 0 \ \ \forall j=1,\ldots, q \ \ on \ \ U.
\end{equation*}
It follows from Frobenius Theorem that the system of $1$-forms $\omega_1,\dots,\omega_q$ is integrable if and only if 
the distribution $D$ defines a foliation on $M$ of dimension $n-q$.

\subsection{Germs of singular holomorphic foliations and flags }\label{sec_foliation}
Let $M$ be a complex manifold. A singular holomorphic foliation can be defined as a pair $\mathcal{F}=(\mathcal{F}^{*},S)$, where $S \subsetneqq M$ is an analytic subset and $\mathcal{F}^{*}$ is  a nonsingular holomorphic foliation of $M\setminus S$. We refer to $S$ as the singular set of $\mathcal{F}$ and denote it as $S =\Sing(\mathcal{F})$. The leaves of $\mathcal{F}$ are the leaves of $\mathcal{F}^*$ on $M\setminus \Sing(\mathcal{F})$. 

 Throughout this work, we focus on holomorphic foliations of dimension one and codimension one. Therefore, it is convenient to express the framework within this context.

\begin{proposition}(\cite {AlcidesBrunoBook} Propositions 1.5 and 1.11)\label{pro_reduced}
Let $\mathcal{F}$ be a singular holomorphic foliation on a complex manifold $M$ of dimension one or codimension one. There exists a foliation $\mathcal{F}_1$ in $M$ with the following properties:
\begin{itemize}
	\item[a)]the irreducible components of $\Sing(\mathcal{F}_1)$ are of codimension $\geq 2$ , where $\Sing(\mathcal{F}_1) \subset \Sing(\mathcal{F}_2)$.
	\\
	\item[b)] $\mathcal{F}_1$ coincides with $\mathcal{F}$ in $M\setminus \Sing(\mathcal{F})$.
	\\
	\item[c)] $\mathcal{F}_1$ is maximal, that is, if $\mathcal{F}_2$ is the foliation in $M$ satisfying (a) and (b), then $\mathcal{F}_2 = \mathcal{F}_1$.
	\\
\end{itemize}
\end{proposition}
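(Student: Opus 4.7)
The plan is to prove Proposition~\ref{pro_reduced} in three phases: local reduction via unique factorization, gluing through transition cocycles, and maximality via Hartogs extension. The two cases (dimension one and codimension one) run in parallel, since in both situations $\mathcal{F}$ is locally presented by a single holomorphic object: a vector field $X_\alpha$ or an integrable $1$-form $\omega_\alpha$, defined on a chart $U_\alpha$ of a suitable open cover $\{U_\alpha\}$ of $M$.

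For the local reduction I would exploit that the local ring $\mathcal{O}_{M,x}$ is a UFD. On each chart $U_\alpha$, write $X_\alpha = g_\alpha \, \tilde{X}_\alpha$ (respectively $\omega_\alpha = g_\alpha \, \tilde{\omega}_\alpha$), where $g_\alpha$ is the greatest common divisor of the coefficient functions. The resulting $\tilde{X}_\alpha$ (resp. $\tilde{\omega}_\alpha$) has coefficients with no common factor, so the common zero set of its coefficients is an analytic subset of codimension $\geq 2$: otherwise a codimension-one irreducible component would force a nontrivial common factor, contradicting the extraction of the GCD. By construction, $\tilde{X}_\alpha$ defines the same foliation as $X_\alpha$ on $U_\alpha\setminus\{g_\alpha=0\}$, and in particular on $U_\alpha\setminus \Sing(\mathcal{F})$, which yields property (b) locally.

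Next I would glue. On an overlap $U_\alpha\cap U_\beta$ the cocycle defining $\mathcal{F}$ gives $X_\alpha = h_{\alpha\beta} X_\beta$ for a nowhere-vanishing holomorphic function $h_{\alpha\beta}$; substituting the factorizations yields $g_\alpha \tilde{X}_\alpha = h_{\alpha\beta} g_\beta \tilde{X}_\beta$. A comparison of greatest common divisors of the coefficients on both sides (applied germ by germ along irreducible codimension-one components) shows that $g_\alpha$ and $h_{\alpha\beta} g_\beta$ differ only by a unit, and hence $\tilde{X}_\alpha = \tilde{h}_{\alpha\beta}\tilde{X}_\beta$ for a nowhere-vanishing holomorphic $\tilde{h}_{\alpha\beta}$ on $U_\alpha\cap U_\beta$. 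The $\tilde{X}_\alpha$'s (respectively $\tilde{\omega}_\alpha$'s) then assemble into a globally defined foliation $\mathcal{F}_1$ on $M$ whose singular set has codimension $\geq 2$, establishing (a) and (b).

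For maximality, suppose $\mathcal{F}_2$ also satisfies (a) and (b). Then $\mathcal{F}_1$ and $\mathcal{F}_2$ coincide on $M\setminus \Sing(\mathcal{F})$, which is open and dense. Locally, on each chart, the defining objects of $\mathcal{F}_1$ and $\mathcal{F}_2$ are related by a nowhere-vanishing holomorphic function on this dense open set; since both singular sets are of codimension $\geq 2$, Hartogs' extension theorem extends this relation to a nowhere-vanishing holomorphic unit throughout $U_\alpha$, forcing $\mathcal{F}_2 = \mathcal{F}_1$ globally. The delicate step, and the one I expect to be the main obstacle, is the gluing: one must verify that after factoring out $g_\alpha$ and $g_\beta$ separately the remaining transition function is genuinely holomorphic and nonvanishing, not merely meromorphic, which requires a precise divisibility comparison along each irreducible codimension-one component of the intermediate vanishing loci.
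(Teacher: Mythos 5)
The paper does not prove this proposition; it is quoted verbatim from \cite{AlcidesBrunoBook} (Propositions 1.5 and 1.11), and your three-step argument (extract the local GCD of the coefficients of the defining vector field or $1$-form using that $\mathcal{O}_{M,x}$ is a UFD, glue via the comparison of GCDs on overlaps, and obtain uniqueness by extending the proportionality unit across the codimension $\geq 2$ singular sets by the Riemann--Hartogs extension theorem) is precisely the standard proof given in that reference. The sketch is sound; the only points worth making explicit are that the germ-level GCD extends to a holomorphic divisor on a sufficiently small chart (coherence), and that integrability survives the division, since $0=d\omega\wedge\omega=g^2\,d\tilde\omega\wedge\tilde\omega$ forces $d\tilde\omega\wedge\tilde\omega=0$.
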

The reduced properties \textit{a),b)}, and \textit{c)} in Proposition \ref{pro_reduced} aims to avoid superfluous singularities, namely, components of codimension one in the singular set of the foliation. Therefore, from now on, we only consider \textit{reduced foliations} as $\mathcal{F}_1$ given above (see \cite{suwa} Proposition 1.7, for the cases of foliations of arbitrary dimension).

In particular, a germ of one-dimensional holomorphic foliation $\mathcal{F}_1$ at $(\mathbb{C}^{n},0)$ can be determined by a germ of holomorphic vector field $X$, such that $\Sing(\mathcal{F}_1) = \Sing(X) = \{z:X(z) =0\}$. In this case, each integral curves of $X$ is a leaf of $\mathcal{F}_1$. Furthermore, a germ of holomorphic foliation of codimension one $\mathcal{F}_2$ at $(\mathbb{C}^n,0)$ can be determined by a germ of holomorphic $1$-form $\omega$ such $\Sing(\mathcal{F}_2) = \Sing(\omega) = \{z; \omega(z) = 0 \}$ and 
\begin{equation}\label{Frobe}
d\omega \wedge \omega \equiv 0, 
\end{equation}
that is, $\omega$ satisfies the integrability condition.  Now, we are ready to define a germ of $2$-flags of foliations. We say $ \mathcal{F} = (\mathcal{F}_{1}, \mathcal{F}_{2})$ is a germ of $2$-flag of foliations at $(\mathbb{C}^{n},0)$ if the following condition is hold:
\begin{equation}\label{flag equation}
	\omega(X) \equiv 0.
\end{equation}
We define the singular set of the germ of the $2$-flag $\mathcal{F}$ as the union of the singular sets of the involved foliation
$$\Sing(\mathcal{F}) = \Sing(\mathcal{F}_{1}) \cup \Sing(\mathcal{F}_{2}).$$
If we take a coordinate system $(x_{1}, \dots, x_{n})$ on a representative open set $U\subset \mathbb{C}^3$, we can write
\begin{equation}\label{vector field}
	X = f_{1}(x_{1},x_{2}, \dots, x_{n})\dfrac{\partial}{\partial x_{1}} + f_{2}(x_{1},x_{2},\dots, x_{n})\dfrac{\partial}{\partial x_{2}} + \dots +  f_{n}(x_{1},x_{2},\dots, x_{3})\dfrac{\partial}{\partial x_{n}},
\end{equation}

\noindent where $f_{i}$ are holomorphic functions on $U$ for $i=1,2,\dots, n$, that is, $f_{i} \in \mathcal{O}(U).$ Analogously, 
\begin{equation}\label{form}
	\omega = A_{1}(x_{1},x_{2}, \dots, x_{n})dx_{1} + A_{2}(x_{1},x_{2}, \dots, x_{n})dx_{2} + \dots  +A_{n}(x_{1},x_{2}, \dots, x_{n})dx_{3},
\end{equation}

\noindent where $A_{i}\in \mathcal{O}(U)$. Therefore, the condition ($\ref{flag equation}$) can be written as follows

$$ \omega(X) = \sum_{i=1}^{n}A_{i}f_{i} = 0.$$

Flags of foliations (and distributions) can be considered in more general contexts and have been studied by several authors in recent years (see \cite{ MauResid,Cano2014,Feigin,Ferr_Lour, DanubiaTh, flagsMol,suwa}). However, there are still open problems in flag theory. One example is the Rationality Conjecture for Flags,  (see \cite{MauResid}, p.1176), which derives from the same problem for foliations (\cite{Baum_Bott}, p.287). Another open question is how to compute the residue of a flag in general, addressed in works such as \cite{MauResid, Ferr_Lour} (see also the survey \cite{L&R}). Moreover, there are many topics closely related to flags that naturally appear in the theory of foliation. For instance, Marco Brunella proposed a conjecture (see \cite{Cerveau}, p.443) stating that a two-dimensional holomorphic foliation $\mathcal{F}$ on $\mathbb{P}^{3}$ either admits an invariant algebraic surface or it composes a flag of holomorphic foliations.

\section{Flags and Ordinary differential equations}\label{Section: ODEand2flags}

From now on we consider second-order ordinary differential equation
\begin{equation}\label{generalequation2}
	u''(t)=\frac{P\left(u(t),u'(t),t\right)}{Q\left(u(t),u'(t),t\right)}
\end{equation} 
defined on a domain (open and connected subset) $T\subset\mathbb{C}$ and $P,Q$ are holomorphic coefficients depending on the solution $u(t)$, $u'(t)$ and $t$, where $Q\not\equiv 0$. Through a classical method of the theory of differential equations, we can introduce a new
	parameter representing the first derivative and any ordinary differential equation
	is converted into a  first-order system of differential equation, or a vector field, which we denote by $X$. It is well known that the integral curves of $X$, correspond, via natural projection, to the solutions of equation \eqref{generalequation2}. It is also known that $X$ defines a holomorphic one-dimensional foliation, $\mathcal{F}_{1}$, such that it leaves project to the graphs of the solutions of \eqref{generalequation2}.

	As mentioned earlier, in the introduction, we refer to \cite{artigo mayc} for a study of the vector field $X$ in the specific case where $P$ and $Q$ are polynomials. The work focuses on considering $\mathcal{F}_1$ on a complex projective space. 
	
	Let us give this process in the context of holomorphic germs.
Consider $U\subset \mathbb{C}^3$ be an open subset such that $0\in U$. Fixing a coordinate system $(x_1,x_2,x_3)$ at $U$, define a vector field $X$ given by 
\begin{equation}\label{totalderivative} X=x_{2}Q(x_1,x_2,x_3)\frac{\partial}{\partial x_1 }+P(x_1,x_2,x_3)\frac{\partial}{\partial x_{2}}+ Q(x_1,x_2,x_3)\frac{\partial}{\partial x_3},
\end{equation}
where $P,Q\in \mathcal{O}(U)$ and  $Q\not\equiv 0$. It is easy to see that for any solution $u(t)$ of \eqref{generalequation2} the holomorphic function $\phi:T\to\mathbb{C}^3$ given by $$\phi(t)=\left(u(t),u'(t),t \right)$$ is an integral curve of $X$. Conversely, let $\gamma:T\to\mathbb{C}^3$ be an integral curve of $X$ given by $\gamma(s)=\left(\gamma_1(s),\gamma_{2}(s),\gamma_3(s) \right)$. Then 
\begin{eqnarray*}&&\frac{d}{ds}\gamma(s)=\left(\gamma_1'(s),\gamma_2'(s),\gamma_3'(s) \right) ,
\end{eqnarray*}
implies 
\[\gamma_{1}''(s)Q\left(\gamma_1(s),\gamma_1'(s),s+c\right)=P\left(\gamma_1(s),\gamma_1'(s),s+c\right)\]
and taking $t:=s+c$ if necessary, we have that $\gamma_1(t)$ is a solution of \eqref{generalequation2}.
Note that the integral curves of the vector field $X$ project to the graphs of the solutions of the second-order differential equation. The discussion above motivates the following definitions. 

\begin{definition}\label{def: onedimfol}
We say that a germ of one-dimensional holomorphic foliation $\mathcal{F}_1$ at $ (\mathbb{C}^3,0)$ is \textit{associated with a second-order ordinary differential equation} if there are a representative open subset $U\subset \mathbb{C}^3$, with $0\in U$ and a coordinate system $(x_1,x_2,x_3)$ on $U$, where $\mathcal{F}_1$ can be determined by a vector field $X$ given by
\[
X=x_{2}Q(x_1,x_2,x_3)\frac{\partial}{\partial x_1 }+P(x_1,x_2,x_3)\frac{\partial}{\partial x_{2}}+ Q(x_1,x_2,x_3)\frac{\partial}{\partial x_3},
\]
where $P,Q \in \mathcal{O}(U)$. 
\end{definition}

\begin{remark}\label{remark: nonsing}
	
It is important to note that  we are dealing with \textit{singular local theory}. This means that we  assume $P(0)=Q(0)=0$, $P,Q\not\equiv 0$ and $P,Q$ without common factors, in Definition \ref{def: onedimfol}. Then, since $\mathcal{F}_1$ is reduced (see Proposition \ref{pro_reduced}) we have \[
\Sing(\mathcal{F}_1) = \{P=0\}\cap \{Q=0\}.
\]	

\end{remark}

	\begin{example}[\textit{Non-singular case}]\label{ex: nonsing}
	Consider the equation
	\begin{equation}\label{eq: nonsing}	
		u''(t) = P(u(t),u'(t),t),
	\end{equation}
	where $P\in \mathcal{O}(U)$ is a holomorphic function for some open subset $U\subset \mathbb{C}^3$. In this case, the one-dimensional holomorphic foliation associated $\mathcal{F}_1$ is determined by the vector field
	\[
	X= x_2\frac{\partial}{\partial x_1} + P(x_1,x_2,x_3)\frac{\partial}{\partial x_2} + 1 \frac{\partial}{\partial x_3}.
	\]	
	Then, $\Sing(\mathcal{F}_1)=\emptyset$. It follows from \textit{Flow Box Theorem}, see (\cite{{Ily_Yako}}, Theorem 1.14. p.9), that there are an open subset $V\subset \mathbb{C}^3$ and a coordinate system $(y_1,y_2,y_3)$ at $0\in V$ such that $X=\frac{\partial }{\partial y_3}$.
\end{example}

Now, we will make a slight adaptation to the definition of flags in the context of second-order equations.

\begin{definition}\label{def: flag2ODE}
	A germ of \textit{$2$-flag of holomorphic foliations associated with a second-order ordinary differential equation}  is a pair of germs of holomorphic foliations $\mathcal{F} = (\mathcal{F}_1,\mathcal{F}_2)$ at $(\mathbb{C}^3,0)$, such that $\mathcal{F}_1$ is associated with a second order ordinary differential equation, determined by the vector field $X$ and $\mathcal{F}_2$ is a germ of codimension one holomorphic foliation induced by an integrable holomorphic $1$-form $\omega$  satisfying 
	$$\omega(X) = 0.$$
	\noindent For the sake of simplicity, we call such a pair as a \textit{$2$-flag of a second-order ordinary differential equation.} 
	
\end{definition}


	\begin{example}\label{ex: nonsingflag}
Consider the one-dimensional holomorphic foliation $\mathcal{F}_1$, in Example \ref{ex: nonsing}. Let $\mathcal{F}_2$ be the germ of codimension one holomorphic foliation determined by the $1$-form $\omega = dy_1$ on $V$, and let $\mathcal{F}_3$ be the germ of codimension one holomorphic foliation determined by the $1$-form $\nu=dy_2$ on $V$. Note that  $(\mathcal{F}_1,\mathcal{F}_2)$ and $(\mathcal{F}_1,\mathcal{F}_3)$ are $2$-flags of second-order ordinary differential equation.  
\end{example}

\begin{definition}\label{def: firstIntegral}
	A \textit{first integral} for a second-order ordinary differential equation is a nonconstant germ of a function $f:(\mathbb{C}^3,0)\to (\mathbb{C},0)$ which is constant on the leaves of the one-dimensional holomorphic foliation associated (see Definition \ref{def: onedimfol}).
\end{definition}

For those unfamiliar with these concepts, we recommend referring to the esteemed book \cite{Arnold 1} as a valuable resource.

	\begin{example}\label{remark: df(X)}
	Consider a second-order ordinary differential equation as in \eqref{generalequation2}. Let $\mathcal{F}_1$ be the germ of one-dimensional holomorphic foliation, represented by
\[
X=x_{2}Q(x_1,x_2,x_3)\frac{\partial}{\partial x_1 }+P(x_1,x_2,x_3)\frac{\partial}{\partial x_{2}}+ Q(x_1,x_2,x_3)\frac{\partial}{\partial x_3},
\]
where $P,Q \in \mathcal{O}(U)$. Suppose that equation \eqref{generalequation2} has a (holomorphic) first integral $f$. Then $df(X)\equiv 0$, (where $d$ is the exterior deriviative). Since $\omega = df$ define a germ of codimension one holomorphic foliation $\mathcal{F}_2$, therefore, the pair  $\mathcal{F}=(\mathcal{F}_1,\mathcal{F}_2)$ is a $2$-flag of a second-order ordinary differential equation. 
\end{example}

	\begin{example}\label{ex: twofirstInt}
	In Example \ref{ex: nonsing}, equation \eqref{eq: nonsing} admits two functionally independent first integrals. In this case, we say that the equation is a \textit{completely integrable system}.	 
	\end{example}

The Integrability problem is a classical and relevant question related to the existence of first integrals (see \cite{Arnold 1}). A relevant fact mentioned in the introduction is that many systems, equations (and foliations) are not completely integrable. Moreover, they may not have any kind of first integrals. There are numerous results in this direction. For instance, the classical Poincaré's nonintegrability theorem \cite{Poincare1} establishes a condition of the \textit{resonance} as an obstruction to the existence of first integrals (not even formal ones).
As another relevant result, we can cite \cite{LibreVallsZhang}, where the authors demonstrate that (almost everywhere) the completely integrable systems are linear. 
Specifically, in the context we are exploring in this work, we can refer \cite{RebeloReis}, which presents germs of vector fields at $(\mathbb{C}^3,0)$, which admits a formal first integral but do not admit any holomorphic first integral.

From now on, we address a weaker version of the Integrability problem and solve it for several equations by exhibiting  the $2$-flag of foliations associated with them. Let us begin with the linear case.

\begin{example}\label{example: linearHILL}
As a very special and particular case, we refer to \cite{FBH,FBT}. 
The authors study the linear second-order ordinary differential equation of the form
\begin{equation}\label{eq: lin}
u''(t) =- \frac{b(t)u'(t) + c(t)u(t)}{a(t)},
\end{equation}
with holomoprhic coefficients $a,b,c$, and the vector field associated with \eqref{eq: lin}, on an open set $U\subset\mathbb{C}^3$ given by $$X=x_2a(x_3)\frac{\partial}{\partial x_1} - (b(x_3)x_2+c(x_3)x_1)\frac{\partial}{\partial x_2}+ a(x_3)\frac{\partial}{\partial x_3}$$ (which is not necessarily linear).  They show, among other results, that the  1-form $$\omega =-a(x_3)x_2dx_1+a(x_3)dx_2+(a(x_3)x_2^2+b(x_3)x_1x_2+c(x_3)x_1^2)dx_3$$ is integrable and satisfies $\omega(X)\equiv 0$. We can rewrite the above saying that \textit{every linear equation as in \eqref{eq: lin} admits a $2$-flag of a second-order ordinary differential equation}. Additionally, the authors present conditions for the existence of a meromorphic first integral and demonstrate, in accordance with classical theory, that $\omega$ is a rational pullback of a Riccati foliation.
\end{example}

Now, we will examine the existence of a germ of a $2$-flag of a second-order ordinary differential equation associated with another classical equation, albeit \textit{nonlinear}.

\begin{example}[\textit{Emden-Fowler equation}]\label{example: Emden-Fowler} 
	Consider the equation
	\begin{equation}\label{eq: Emden-Fowler}
		u''(t)=kt^m u(t)^n,
	\end{equation}
	defined on a domain $T\subset \mathbb{C}$, where $k\in \mathbb{C}$ and $m,n\in \mathbb{Z}\setminus \{0\}$. 
	For $n=1$, this equation is linear (see Example \ref{example: linearHILL}). Let $U\subset \mathbb{C}^2\times T$ be an open set with $0\in U$. We fix an analytic coordinate system $(x_1,x_2,x_3)$ on $U$. If $m\cdot n>0$, the one-dimensional holomorphic foliation associated with equation \eqref{eq: Emden-Fowler} is non-singular (see Example \ref{ex: nonsing}). Suppose that $m<0$ and $n>0$. Then, consider the holomorphic foliation $\mathcal{F}_1$ determined by the germ of vector field $X$ which can be represented by
	\begin{equation*}
		X= x_2 x_3^{-m} \frac{\partial}{\partial x_1} +kx_1^n \frac{\partial}{\partial x_2} + x_3^{-m} \frac{\partial}{\partial x_3}
	\end{equation*}
on $U$. Define the $1$-form on $U$, given by 
	\begin{equation}
		\begin{split}
			\omega=  &   -((n-1)kx_1^nx_3+(m+n+1)x_2 x_3^{-m})dx_1\\ 
			& +x_3^{-m}\left((n-1)x_2x_3+(m+2)x_1\right)dx_2 -(k(m+2)x_1^{n+1}-(m+n+1)x_2^2x_3^{-m})dx_3.
		\end{split}
	\end{equation}
	It follows from a straightforward computation that 
	$\omega \wedge d\omega =0$ and $\omega(X)=0$. Analogously, if $m>0$ and $n<0$, then we consider the vector field
	\begin{equation*}
		X= x_2 x_1^{-n} \frac{\partial}{\partial x_1} +kx_3^m \frac{\partial}{\partial x_2} + x_1^{-n} \frac{\partial}{\partial x_3},
	\end{equation*} 
which also satisfies $\omega(X)\equiv 0$. Therefore, the Emden-Fowler equation admits a germ of $2$-flag of a second-order ordinary differential equation.  
\end{example}

Next, we will explore examples of nonlinear equations that, unlike the Emden-Fowler equation, explicitly depend on $u$, $u'$, and $t$ while admitting a $2$-flag of a second-order ordinary differential equation.

\begin{example}\label{ex: log}
In \cite{Duarte2001}, the equation

	\begin{equation}\label{eq: log1}
	u''(t) = -\frac{u'(t)(u(t) + tu'(t)(2u(t)+1))}{tu(t)},
\end{equation}
was considered in the real domain. The authors present a first integral which is not a (single-valued) holomorphic function. However, we obtain a $2$-flag of foliation associated with  \eqref{eq: log1}. More than that, we generalize the above situation. Let $f:V  \to \mathbb{C}$ be any holomorphic function on an open set $V\subset \mathbb{C}^2$ and $g:\mathbb{C}^3\to \mathbb{C}^2$ be the map $g(x,y,z) = (x,yz)$, in the Euclidean coordinate system $(x,y,z)$. Consider the nonlinear equation
	\begin{equation}\label{eq: log2}
	u''(t) = -\frac{u'(t)f \circ g (u(t),u'(t),t)}{tu(t)},
	\end{equation}
defined on a domain $T\subset \mathbb{C}$. Then, $f\circ g$ is holomorphic in some open set  $U\subset \mathbb{C}^3$.
Let $\mathcal{F}_1$ be the germ of one-dimensional holomorphic foliation associated with \eqref{eq: log2} and represented on $U$ by the vector field 
\[
X = x_1 x_2  x_3 \frac{\partial}{\partial x_1} - x_2(f\circ g(x_1,x_2,x_3)) \frac{\partial }{\partial x_2} + x_1 x_3\frac{\partial }{\partial x_3}.
\] 
Define the $1$-form 
\[
\omega = (f\circ g(x_1,x_2,x_3) -x_1)dx_1 + x_1x_3dx_2 + x_1x_2 dx_3.
\]
A straightforward computation shows that  $\omega(X)\equiv 0$ and $\omega \wedge d\omega \equiv 0$. Denoting by $\mathcal{F}_2$ the germ of codimension one holomorphic foliation determined by $\omega$, we have that $(\mathcal{F}_1,\mathcal{F}_2)$ is a $2$-flag of foliation associated with \eqref{eq: log2}.
\end{example}

\begin{example}\label{example: firstNONlinear}
	Consider the nonlinear equation 
	\begin{equation}\label{eq: firstNONLINExample}
		a_2(u'(t))u''(t)+a_1(u(t))u'(t)+a_3(t)=0,
	\end{equation}
	in a domain $T\subset \mathbb{C}$ where $a_1:U_1\to \mathbb{C}$, $a_2:U_2\to \mathbb{C} $ and $a_3:T\to \mathbb{C}$ are holomorphic functions defined on open sets $U_1,U_2,T\subset \mathbb{C}$, such that $0\in U_1\times U_2\times T\subset \mathbb{D}(0)$, for some polydisc $\mathbb{D}(0)$ on $\mathbb{C}^3$. Denote $U:=U_1\times U_2 \times T$. The one-dimensional holomorphic foliation $\mathcal{F}_1$ is determined by the vector field
	\begin{equation*}\label{eq: totalderive  firstNONLINexample}
		X=x_2a_2(x_2)\frac{\partial}{\partial x_1}-(a_3(x_3)+x_2a_1(x_1))\frac{\partial}{\partial x_2}+ a_2(x_2)\frac{\partial}{\partial x_3}
	\end{equation*}
	on $U$ . Define a germ of $1$-form $\omega$ on $U$, given by
	\begin{equation*}
		\omega = a_1(x_1)dx_1+a_2(x_2)dx_2+a_3(x_3)dx_3.
	\end{equation*}	
	
	\noindent Then, $\omega(X)=0$. Furthermore, since $d\omega=0$, in particular, $\omega$ is integrable and thus determines a codimension one holomorphic foliation, denoted by $\mathcal{F}_2$. Then, the equation \eqref{eq: firstNONLINExample} admits the germ of $2$-flag of a second-order ordinary differential equation $\mathcal{F} = (\mathcal{F}_1,\mathcal{F}_2)$. Moreover, by Poincaré's Lemma, see (\cite{carmo}, Theorem 1, p.20), $\omega$ is exact. That is, there exists a holomorphic function $f$ defined on $U_1\times U_2\times T\subset D$ such that $\omega=df$. In particular, $f$ is a first integral for equation \eqref{eq: firstNONLINExample}. 
\end{example}

To present a slightly more general class of examples, we find conditions for a quasi-linear second-order ordinary differential equation to admit a germ of $2$-flag of foliation.

\begin{proposition}\label{Prop: Quasilinear ode}
	Consider a nonlinear second-order equation of the form
	\begin{equation}\label{eq: quasilinear}
		a_2(u(t),t) u''(t)+a_1(u(t),t)u'(t)+a_3(u(t),t) =0,
	\end{equation}
	defined on a domain $T\subset \mathbb{C}$ where $a_j:U \to\mathbb{C}$ are nonzero holomorphic functions in an open $U\subset \mathbb{C} \times T$ with $0\in U$ and $a_j(0)=0$ for all $j\in \{1,2,3\}$. Suppose that there is an analytic coordinate system $(x_1,x_2,x_3)$ in an open set $V\subseteq U$, such that
	\begin{equation}\label{eq: hyp2 propQuasilin}
		\frac{\partial \left(a_1/a_2\right)}{\partial x_3} = \frac{\partial (a_3/a_2)}{\partial x_1}.
	\end{equation}
Then \eqref{eq: quasilinear} admits a germ of $2$-flag of a second-order ordinary differential equation.
\end{proposition}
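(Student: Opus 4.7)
The plan is to exhibit an explicit integrable holomorphic $1$-form $\omega$ on a representative neighborhood $V$ that annihilates the vector field $X$ associated with the quasilinear equation, thereby producing the codimension-one foliation $\mathcal{F}_2$ forming a $2$-flag with the one-dimensional foliation $\mathcal{F}_1$ determined by $X$.

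First I would write down the vector field attached to \eqref{eq: quasilinear} via Definition \ref{def: onedimfol}: with $P(x_1,x_2,x_3) = -(a_1(x_1,x_3)\,x_2 + a_3(x_1,x_3))$ and $Q(x_1,x_2,x_3) = a_2(x_1,x_3)$, the vector field is
\[
X = x_2 a_2(x_1,x_3)\tfrac{\partial}{\partial x_1} - (a_1(x_1,x_3)x_2 + a_3(x_1,x_3))\tfrac{\partial}{\partial x_2} + a_2(x_1,x_3)\tfrac{\partial}{\partial x_3}.
\]
Motivated by the exact case treated in Example \ref{example: firstNONlinear}, the natural candidate is the holomorphic $1$-form
\[
\omega = a_1(x_1,x_3)\,dx_1 + a_2(x_1,x_3)\,dx_2 + a_3(x_1,x_3)\,dx_3.
\]

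Next, I would check $\omega(X) \equiv 0$ by direct substitution: the contribution from the first and second terms is $a_1 \cdot x_2 a_2 - a_2(a_1 x_2 + a_3) = -a_2 a_3$, which cancels with the contribution $a_3 \cdot a_2$ of the third term. This step is routine.

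The heart of the argument is verifying integrability $\omega \wedge d\omega \equiv 0$. Because none of the $a_j$ depend on $x_2$, computing $d\omega$ and then wedging with $\omega$ leaves a single surviving coefficient in front of $dx_1 \wedge dx_2 \wedge dx_3$. I expect this coefficient to reorganize as
\[
-a_1 \tfrac{\partial a_2}{\partial x_3} + a_2 \tfrac{\partial a_1}{\partial x_3} - a_2 \tfrac{\partial a_3}{\partial x_1} + a_3 \tfrac{\partial a_2}{\partial x_1} = a_2^2 \left(\tfrac{\partial (a_1/a_2)}{\partial x_3} - \tfrac{\partial (a_3/a_2)}{\partial x_1}\right),
\]
at which point hypothesis \eqref{eq: hyp2 propQuasilin} forces the right-hand side to vanish, establishing integrability. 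Declaring $\mathcal{F}_2$ as the foliation determined by $\omega$ and $\mathcal{F}_1$ as the one induced by $X$ then yields the required $2$-flag.

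The only subtle point I anticipate is that the hypothesis \eqref{eq: hyp2 propQuasilin} most naturally produces a closed \emph{meromorphic} form $(a_1/a_2)\,dx_1 + dx_2 + (a_3/a_2)\,dx_3$, which is problematic because $a_2(0) = 0$. Working instead with the holomorphic $\omega = a_2 \cdot \bigl[(a_1/a_2)\,dx_1 + dx_2 + (a_3/a_2)\,dx_3\bigr]$ sidesteps this issue, and the factor $a_2^2$ which emerges in the integrability identity is precisely what matches up with the derivatives of the quotients $a_j/a_2$ in the hypothesis. No further analytic continuation or normalization should be needed.
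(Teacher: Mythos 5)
Your proposal is correct and follows essentially the same route as the paper: the same vector field $X$, the same candidate $1$-form $\omega = a_1\,dx_1 + a_2\,dx_2 + a_3\,dx_3$, the same verification of $\omega(X)=0$, and the same reorganization of the coefficient of $dx_1\wedge dx_2\wedge dx_3$ in $\omega\wedge d\omega$ into $a_2^2$ times the difference of derivatives of the quotients, which vanishes by hypothesis \eqref{eq: hyp2 propQuasilin}. Your closing remark about multiplying the closed meromorphic form by $a_2$ to obtain a holomorphic representative is a fair observation but is not needed as a separate step, since the computation is carried out directly on the holomorphic $\omega$.
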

\begin{proof}
The one-dimensional foliation $\mathcal{F}_1$ associated with \eqref{eq: quasilinear} can be determined by the vector field
	$$X= x_2a_2(x_1,x_3)\frac{\partial}{\partial x_1}-\left(x_2a_1(x_1,x_3) +a_3(x_1,x_3)\right)\frac{\partial}{\partial x_2} +a_2(x_1,x_3)\frac{\partial}{\partial x_3},$$ 
   on $V$. Now, a holomorphic $1$-form $\omega$ given by
	$$
	\omega = a_1(x_1,x_3) dx_1 + a_2(x_1,x_3)dx_2 + a_3(x_1,x_3) dx_3,$$
on $V$. Then, $\omega(X)=0$. 
It remains to prove that $\omega$ is integrable on $V$. Since $\frac{\partial a_j}{\partial x_2} =0 , \forall j,$ we have
		\begin{eqnarray*}
		\omega \wedge d\omega &=& -\left(-a_3\frac{\partial a_2}{\partial x_1} +a_2\frac{\partial a_3}{\partial x_1} -a_2\frac{\partial a_1}{\partial x_3} + a_1\frac{\partial a_2}{\partial x_3}\right)dx_{1}\wedge dx_{2} \wedge dx_{3} \\
		&=& -a_2^2\left( \frac{\partial}{\partial x_1}\frac{a_3}{a_2} - \frac{\partial}{\partial x_3}\frac{a_1}{a_2} \right)dx_{1}\wedge dx_{2} \wedge dx_{3}. 
	\end{eqnarray*}
	It follows from \eqref{eq: hyp2 propQuasilin} that $\omega \wedge d\omega =0$. Now, consider the codimension one holomorphic foliation $\mathcal{F}_2$ determined by $\omega$. Therefore, $(\mathcal{F}_1,\mathcal{F}_2)$ is a $2$-flag of a second-order ordinary differential equation associated with \eqref{eq: quasilinear}. 

\end{proof}

\begin{example}\label{example: corPropQuasilinear} It follows from Propostion \ref{Prop: Quasilinear ode} that \begin{equation*}
		u''(t)=-\frac{u^4(t)+2t u(t)^3 u'(t)}{2u(t)^2}
	\end{equation*} admits a germ of $2$-flag of a second-order ordinary differential equation.
\end{example}

\section{Obstructions for the existence of $2$-flags}\label{Section: Obstructions}
As  before, consider the $2$-flag of a second-order ordinary differential equation (see Definition \ref{def: flag2ODE}) given by the pair of germs of holomorphic foliations $\mathcal{F}=(\mathcal{F}_1,\mathcal{F}_2)$ such that there are an open subset $U\subset \mathbb{C}^3$, with $0\in U$ and a coordinate system $(x_1,x_2,x_3)$ in $U$, where $\mathcal{F}_1$ can be defined by the vector field
\[
X=x_{2}Q(x_1,x_2,x_3)\frac{\partial}{\partial x_1 }+P(x_1,x_2,x_3)\frac{\partial}{\partial x_{2}}+ Q(x_1,x_2,x_3)\frac{\partial}{\partial x_3},
\]
and $\mathcal{F}_2$ can be defined by the integrable 1-form 
\[
\omega = \sum_{j=1}^{3}A_jdx_j,
\]
with $P,Q,A_j\in \mathcal{O}(U),P(0)=Q(0)=A_j(0)=0$ and $P,Q$ without common factors.

According to classical notions in the theory of differential equations, in the case where $\frac{\partial P}{\partial x_3} =\frac{\partial Q}{\partial x_3} \equiv 0$, we say that the second-order ordinary differential equation is \textit{autonomous}. Similarly, the equation is \textit{trivially reduced to a first-order ordinary differential equation} if $\frac{\partial P}{\partial x_1} =\frac{\partial Q}{\partial x_1}\equiv 0$. Then, we have the following classifications and obstructions. 

\vspace{5mm} 

\begin{proposition}\label{proposition: first obstruction}
	Consider  the equation  
	\begin{equation} \label{eq:propfirstobstru}
		u''(t)=\frac{P(u(t),u'(t),t)}{Q(u(t),u'(t),t)}, 
	\end{equation}
where $P$ and $Q$ are holomorphic functions, without common factors, on an open subset $U\subset \mathbb{C}^3$ such that $P(0)=Q(0)=0$, $P,Q\not\equiv 0$ and $\mathcal{F} = (\mathcal{F}_{1}, \mathcal{F}_{2})$ is the $2$-flag associated with \eqref{eq:propfirstobstru}. Then
	\\
	\begin{enumerate}
		\item[i)] 	$A_2$ cannot be identically null; 
		
		\vspace{0.5 cm}
		
		\item[ii)] $A_3$ is identically null if, and only if, equation \eqref{eq:propfirstobstru} is autonomous;
		
		\vspace{0.5 cm}
		
		\item[iii)]  $A_1$ is identically null if, and only if,  equation \eqref{eq:propfirstobstru} is trivially reduced to a first-order ordinary differential equation. 
		
	\end{enumerate}
	
\end{proposition}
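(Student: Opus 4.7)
The plan is to exploit two constraints simultaneously: the flag identity $\omega(X)\equiv 0$, which expands to
\[
x_2 Q A_1 + P A_2 + Q A_3 \equiv 0,
\]
together with the Frobenius integrability $\omega\wedge d\omega \equiv 0$. Each item of the proposition corresponds to setting one $A_j$ to zero and reading off the constraints thereby imposed on $P,Q$.

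For (i), I would assume $A_2\equiv 0$ and seek a contradiction. The flag identity collapses to $Q(x_2 A_1+A_3)\equiv 0$; since $Q\not\equiv 0$ and $\mathcal{O}(U)$ is an integral domain, this forces $A_3=-x_2 A_1$, so that $\omega = A_1\,(dx_1 - x_2\,dx_3)$. The factor $dx_1 - x_2\,dx_3$ is the prototypical non-integrable contact form; a direct computation gives
\[
\omega\wedge d\omega = -A_1^{2}\, dx_1\wedge dx_2\wedge dx_3.
\]
Integrability would then force $A_1\equiv 0$, whence $\omega\equiv 0$, contradicting the fact that $\omega$ defines a codimension-one foliation.

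For (ii), in the forward direction I set $A_3\equiv 0$, so that $\omega = A_1\,dx_1+A_2\,dx_2$. A direct computation yields
\[
\omega\wedge d\omega = \bigl(A_2\,\partial_{x_3}A_1 - A_1\,\partial_{x_3}A_2\bigr)\,dx_1\wedge dx_2\wedge dx_3,
\]
so integrability, combined with $A_2\not\equiv 0$ from part (i), gives $\partial_{x_3}(A_1/A_2)=0$. The flag identity reduces to $x_2 Q A_1 + P A_2 = 0$, i.e.\ $A_1/A_2 = -P/(x_2 Q)$, and since $x_2$ is independent of $x_3$ one concludes $\partial_{x_3}(P/Q)=0$. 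For the converse, if $P$ and $Q$ are independent of $x_3$, I set $\omega := P\,dx_1 - x_2 Q\,dx_2$: a direct substitution gives $\omega(X)=0$, and because $d\omega$ is proportional to $dx_1\wedge dx_2$ only, $\omega\wedge d\omega\equiv 0$ automatically. Part (iii) is the mirror image, swapping the roles of $x_1$ and $x_3$ in the integrability computation. Assuming $A_1\equiv 0$, the analogous calculation yields
\[
\omega\wedge d\omega = \bigl(A_3\,\partial_{x_1}A_2 - A_2\,\partial_{x_1}A_3\bigr)\,dx_1\wedge dx_2\wedge dx_3,
\]
which, combined with the flag relation $P A_2 + Q A_3 = 0$, forces $\partial_{x_1}(P/Q)=0$; the converse is exhibited by $\omega := Q\,dx_2 - P\,dx_3$.

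The step I expect to need the most care is the passage, in (ii) and (iii), from the invariant condition $\partial_{x_3}(P/Q)=0$ (respectively $\partial_{x_1}(P/Q)=0$) to the individual statement $\partial_{x_3}P=\partial_{x_3}Q=0$ (respectively in $x_1$) appearing in the definition of \emph{autonomous} (respectively \emph{trivially reducible}). The resolution relies on the fact that $\mathcal{F}_1$ depends only on the vector field $X$ up to multiplication by a nowhere-zero holomorphic factor, so that coprime representatives of $P$ and $Q$ with the required independence can always be chosen; the rest of the argument is elementary bookkeeping between the flag equation and $\omega\wedge d\omega=0$.
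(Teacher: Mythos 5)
Your proposal is correct and follows essentially the same strategy as the paper: expand the flag identity $x_2QA_1+PA_2+QA_3\equiv 0$ and play it against the Frobenius condition $\omega\wedge d\omega\equiv 0$, with the same delicate final step of passing from $\partial_{x_3}(P/Q)=0$ to autonomy via coprimality (which you in fact treat more carefully than the paper, since a unit factor must be absorbed). The only divergence is in item (i), where the paper splits into two cases — $V(A_1)\neq\emptyset$ giving a codimension-one singular set that contradicts reducedness, and $V(A_1)=\emptyset$ giving a non-integrable contact form — whereas your single computation $\omega\wedge d\omega=-A_1^{2}\,dx_1\wedge dx_2\wedge dx_3$ reaches the contradiction $\omega\equiv 0$ directly and without invoking that $\mathcal{F}_2$ is reduced; this is a clean, slightly more economical variant of the same idea.
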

\begin{proof}

Suppose that $A_{2} \equiv 0$. Then,
\begin{equation*}
	\omega(X) = x_{2}QA_{1} + PA_{2}+QA_{3} =0.
\end{equation*}
This implies that $A_{3} = -x_{2}A_{1}$ in $U \setminus V(Q)$. According to the Identity Theorem, this equality also holds in $U$. However, this implies a superfluous component in the singular set of $\mathcal{F}_{2}$, that is,

\begin{eqnarray*}
	\omega & = & A_1dx_1 - x_2A_1dx_3\\
	& = & A_1(dx_1-x_2dx_3).
\end{eqnarray*}
In particular, if $V(A_1) \neq \emptyset$, then $\mbox{Sing}(\omega)=V(A_1)$, which has  $\dim_{\mathbb{C}} V(A_1)=2$. Consequently, $\cod\Sing(\omega)= 1$. Since $\mathcal{F}_2$ is a reduced foliantion, by Proposition \ref{pro_reduced}, we have $\cod \Sing(\omega)\geq 2$. Thus, we have a contradiction. Otherwise, if $V(A_1) = \emptyset$, then $\omega = dx_1-x_2dx_3$ is not integrable. Therefore, $A_2\not\equiv 0$. 

\noindent Now, to prove item $(ii)$, we suppose that $A_3 \equiv 0$ on $U$. Then, $\omega(X)\equiv 0$ is equivalent to $ x_2QA_1 = -PA_2$ and $x_2Q\omega = 
A_2\left(-Pdx_1+x_2Qdx_2\right)$. Let us define $\eta := -Pdx_1 + x_2Qdx_2$. Note that both $\omega$ and $\eta$ determine $\mathcal{F}_2$ on $U$. It follows from the integrability condition that
\begin{equation*}
	\begin{array}{ll}
		0 & = \eta \wedge d\eta  \\ \\
		& =\Big(x_{2}P\frac{\partial Q}{\partial x_3} -x_{2}Q\frac{\partial P}{\partial x_3}\Big)dx_{1}\wedge dx_{2}\wedge dx_{3} \\ \\
		& =x_{2}Q^2\frac{\partial}{\partial x_3}\left(\frac{P}{Q}\right)dx_{1}\wedge dx_{2}\wedge dx_{3}.
	\end{array}
\end{equation*}
Denote $W:= \{(x_1,x_2,x_3)\in U: Q(x_1,x_2,x_3)=0 \ \mbox{and} \ x_{2}=0\}$. Since $W\subset U$ is a proper analytic subset of a connected complex manifold $U$, $W$ is nowhere dense, and $U\setminus W$ is arcwise connected. Thus $\dfrac{\partial}{\partial x_3}\left(\dfrac{P}{Q}\right)\equiv 0 $ on $U\setminus W$.  Therefore, outside $W$ the meromorphic function $\frac{P}{Q}$ does not depend on the variable $x_3$. Since $P$ and $Q$ have no common factors, they do not depend on $x_3$. Conversely, suppose that the second-order equation is autonomous. Then, there exist holomorphic functions $p$ and $q$ defined on an open subset  $V$ of $ \mathbb{C}^2$ containing the origin $0\in \mathbb{C}^2$, such that $V \hookrightarrow  U \subset \mathbb{C}^3$ is given by 
\[
X=x_2q(x_1,x_2)\frac{\partial}{\partial x_1} + p(x_1,x_2)\frac{\partial}{\partial x_2}+q(x_1,x_2)\frac{\partial}{\partial x_3}.
\]  
We define a germ of $1$-form $\omega = -p(x_1,x_2)dx_1+x_2q(x_1,x_2)dx_2$ on $U$. Note that $\omega$ satisfies $\omega(X)=0$ and $\omega \wedge d\omega = 0$. The proof of item $(iii)$ follows in a similar manner. 
\end{proof}
\vspace{5mm} 

The following propositions serve as valuable tools in the classification of germs of $2$-flags associated with second-order ordinary differential equations.

\vspace{5mm}

\begin{proposition}\label{thm 10.1}
	Let  $(\mathcal{F}_1,\mathcal{F}_2)$ and $(\mathcal{G}_1,\mathcal{F}_2)$ be $2$-flags of second order differential equations. Then, $\mathcal{F}_1$ and $\mathcal{G}_1$ are representatives of the same germ.
	
\end{proposition}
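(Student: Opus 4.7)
The strategy is purely algebraic. In a common coordinate system $(x_1,x_2,x_3)$ I would write the two vector fields representing $\mathcal{F}_1$ and $\mathcal{G}_1$ as
\[ X = x_2 Q\, \partial_{x_1} + P\, \partial_{x_2} + Q\, \partial_{x_3}, \qquad Y = x_2\tilde Q\, \partial_{x_1} + \tilde P\, \partial_{x_2} + \tilde Q\, \partial_{x_3}, \]
both required to be tangent to the \emph{same} codimension-one foliation $\mathcal{F}_2$, say defined by the integrable $1$-form $\omega = A_1\, dx_1 + A_2\, dx_2 + A_3\, dx_3$. The essential point to exploit is that the first and third components of such a vector field are both multiples of the same factor $Q$ (respectively $\tilde Q$), a rigidity built into the ODE form of Definition \ref{def: onedimfol}.

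Setting $\alpha := x_2 A_1 + A_3$ and $\beta := A_2$, the flag conditions $\omega(X)=0$ and $\omega(Y)=0$ collapse to the linear system
\[ Q\, \alpha + P\, \beta = 0, \qquad \tilde Q\, \alpha + \tilde P\, \beta = 0. \]
Eliminating $\alpha$ by cross-multiplication yields the single identity $(P\tilde Q - \tilde P Q)\, \beta \equiv 0$ in $\mathcal{O}(U)$.

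Next I would invoke Proposition \ref{proposition: first obstruction}(i), which asserts precisely that $\beta = A_2 \not\equiv 0$. Since $\mathcal{O}(U)$ is an integral domain on the connected representative $U$, cancellation of $\beta$ gives $P\tilde Q = \tilde P Q$. A direct component-by-component expansion of $X\wedge Y$ shows that the $\partial_{x_1}\wedge\partial_{x_3}$ coefficient $x_2 Q \tilde Q - Q x_2 \tilde Q$ vanishes identically, while the remaining coefficients $x_2(Q\tilde P - P\tilde Q)$ and $P\tilde Q - Q\tilde P$ vanish by the relation just derived. Hence $X$ and $Y$ are pointwise collinear on the complement of $\Sing(\mathcal{F}_1)\cup\Sing(\mathcal{G}_1)$, which is a dense open subset of $U$.

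The conclusion follows from Proposition \ref{pro_reduced}: the reduced foliations $\mathcal{F}_1$ and $\mathcal{G}_1$ coincide on a dense open set, hence are representatives of the same germ. The only real obstacle is conceptual rather than computational: one must recognize that Proposition \ref{proposition: first obstruction}(i) provides exactly the non-vanishing needed to cancel $A_2$ and reduce two flag conditions to a single proportionality $P\tilde Q = \tilde P Q$; without that input the argument cannot close, as illustrated by the counterexamples the paper gives in more general settings.
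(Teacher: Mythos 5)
Your argument is correct and is essentially the paper's own proof: both reduce the two flag conditions to $(P\tilde Q-\tilde P Q)A_2\equiv 0$, invoke Proposition \ref{proposition: first obstruction}(i) to get $A_2\not\equiv 0$, and conclude $X\wedge Y\equiv 0$ so the two vector fields define the same germ of foliation. The only cosmetic difference is that you cancel $A_2$ using that $\mathcal{O}(U)$ is an integral domain, while the paper evaluates at a point where $A_2\neq 0$ and then applies the Identity Theorem.
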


\begin{proof}
	Let $\mathcal{F}_1$ and $\mathcal{G}_1$ be germs of one-dimensional holomorphic foliation at $ (\mathbb{C}^3,0)$ associated with two second-order ordinary differential equations. Consider the vector fields $X_1$ and $X_2$ that determine, $\mathcal{F}_1$ and $\mathcal{G}_1$, respectively. Then we have
	\begin{equation}\label{totalderivative2} X_j=x_2Q_j(x_1,x_2,x_3)\frac{\partial}{\partial x_1}+P_j(x_1,x_2,x_3)\frac{\partial}{\partial x_2}+Q_j(x_1,x_2,x_3)\frac{\partial}{\partial x_3},
	\end{equation}
	for each $j\in\{1,2\}$.  It follows from the hypothesis that $X_1$ and $X_2$ are tangent to the same germ of codimension one holomorphic foliation $\mathcal{F}_2$. Let $\omega$  be the germ of an integrable $1$-form that defines $\mathcal{F}_2$. We can represent $\omega=\sum_{j=1}^{3}A_j dx_j$ for some holomorphic functions $A_j \in \mathcal{O}(U)$, which implies that
	$\omega(X_j)=0,\forall j$. This implies
	
	$$x_2A_1Q_1+A_2P_1+A_3Q_1=0 \ \ \mbox{and} \ \ x_2A_1Q_2+A_2P_2+A_3Q_2=0.$$
	It follows from Proposition \ref{proposition: first obstruction} that $A_2\not\equiv 0$. Thus, there exists  $p\in U$ such that $A_2(p)\neq 0$, and we have $Q_2(p) P_1(p)=P_2(p)Q_1(p)$. Since $\dim(V(A_2)) = 2$ it follows from the Identity Theorem that $Q_2(p) P_1(p)=P_2(p)Q_1(p)$  holds for every $p\in U$. Consequently, $X_1 \wedge X_2 \equiv 0$. Therefore $X_1$
	and $X_2$ are linearly dependent.
\end{proof}

The following result shares similarities with Proposition 10 in \cite{DanubiaTh}, employing different proof techniques and specifically tailored to the context of $2$-flags of second-order ordinary differential equations.

\vspace{0.5 cm}

\begin{proposition}\label{thm 10.2} 	Let $(\mathcal{F}_1, \mathcal{F}_2)$ be a $2$-flag of second-order differential equation. If another $2$-flag, $(\mathcal{F}_1, \mathcal{F}_3)$, exists, it going to the only one where $\mathcal{F}_3$ is functionally independent of $\mathcal{F}_2$.
\end{proposition}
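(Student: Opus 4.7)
The approach is by contradiction: assume there exist distinct codimension-one foliations $\mathcal{F}_3 \neq \mathcal{F}_4$ such that both $(\mathcal{F}_1,\mathcal{F}_3)$ and $(\mathcal{F}_1,\mathcal{F}_4)$ are $2$-flags of a second-order ordinary differential equation with $\mathcal{F}_3$ and $\mathcal{F}_4$ both functionally independent of $\mathcal{F}_2$. I aim to conclude $\mathcal{F}_3 = \mathcal{F}_4$.

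Let $X = x_{2}Q\,\partial_{x_{1}} + P\,\partial_{x_{2}} + Q\,\partial_{x_{3}}$ define $\mathcal{F}_1$, and let $\omega_2,\omega_3,\omega_4$ be integrable $1$-forms defining the three codimension-one foliations with $\omega_j(X)=0$ for $j=2,3,4$. Away from $\Sing(\mathcal{F}_1)$, the annihilator $\mathrm{Ann}(X) \subset T^{*}\mathbb{C}^{3}$ has rank two, so $\omega_2,\omega_3,\omega_4$ are linearly dependent at generic points. The hypothesis $\omega_2\wedge\omega_3\not\equiv 0$ ensures that $\omega_2,\omega_3$ span $\mathrm{Ann}(X)$ generically, so I may write $\omega_4 = a\,\omega_2 + b\,\omega_3$ for meromorphic $a,b$. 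Wedging with $\omega_2$ gives $\omega_2\wedge\omega_4 = b\,(\omega_2\wedge\omega_3)$, forcing $b\not\equiv 0$ from the functional independence of $\mathcal{F}_2$ and $\mathcal{F}_4$. Since $\omega_3\wedge\omega_4 = -a\,(\omega_2\wedge\omega_3)$, showing $\mathcal{F}_3 = \mathcal{F}_4$ reduces to proving $a \equiv 0$.

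To obtain $a\equiv 0$ I combine integrability with the second-order ODE structure. After normalizing $b=1$ and writing $d\omega_2 = \alpha_2\wedge\omega_2$ and $d\omega_3 = \alpha_3\wedge\omega_3$ (Frobenius), the expansion of $\omega_4\wedge d\omega_4 = 0$ collapses, using $\omega_2\wedge d\omega_2 = \omega_3\wedge d\omega_3 = 0$, to
\[
\bigl[\,da + a\,(\alpha_2 - \alpha_3)\bigr]\wedge \omega_2 \wedge \omega_3 = 0,
\]
so that $da + a(\alpha_2-\alpha_3) \in \mathrm{span}(\omega_2,\omega_3)$; contracting with $X$ yields a linear transport equation $X(a) = -a\,(\alpha_2-\alpha_3)(X)$ along the flow. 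Independently, the proof of Proposition \ref{proposition: first obstruction} shows that the condition $\omega(X)=0$ together with $\gcd(P,Q)=1$ pins down a canonical basis of $\mathrm{Ann}(X)$, namely
\[
\omega_2^{0} = dx_1 - x_2\,dx_3, \qquad \omega_3^{0} = Q\,dx_2 - P\,dx_3,
\]
and a direct computation shows that integrability of any combination $\omega = f\omega_2^{0}+g\omega_3^{0}$ is equivalent to a Riccati equation along $X$ for the projective coordinate $\phi = f/g$. This rigidity of the Riccati family, together with the singular hypothesis $P(0)=Q(0)=0$, is what forces the transport equation for $a$ to admit only the trivial solution $a\equiv 0$, giving $\mathcal{F}_4 = \mathcal{F}_3$.

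The principal obstacle lies in this last step. A priori the linear transport equation for $a$ admits solutions parametrized by values on a surface transverse to $X$, so the integrability of $\omega_4$ alone does not force $a\equiv 0$. The critical additional ingredient is the Riccati rigidity coming from the specific second-order ODE form of $X$: the ratios $\phi_2,\phi_3$ corresponding to $\omega_2,\omega_3$ are two solutions of the same Riccati equation, $\phi_4$ is determined projectively from $\phi_2,\phi_3$ via $a$, and the singular condition $P(0)=Q(0)=0$ obstructs the existence of a nontrivial holomorphic first integral of $X$ that could sustain a nonzero $a$, thereby forcing $\mathcal{F}_3 = \mathcal{F}_4$.
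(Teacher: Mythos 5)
Your first paragraph already contains, essentially for free, everything the paper actually proves: since $X\neq 0$ on a dense open set (because $Q\not\equiv 0$), its annihilator in $T^{*}\mathbb{C}^{3}$ has rank two there, so any three $1$-forms $\omega_2,\omega_3,\omega_{\mathcal H}$ with $\omega_j(X)=0$ are pointwise linearly dependent and hence $\omega_2\wedge\omega_3\wedge\omega_{\mathcal H}\equiv 0$. That triple-wedge identity is precisely the conclusion of the paper's proof, which reaches it by the explicit factorization $Q\,\omega\wedge\omega_{\mathcal F_3}=(A_1B_2-A_2B_1)\,\eta$ with $\eta=Q\,dx_1\wedge dx_2+x_2Q\,dx_2\wedge dx_3-P\,dx_1\wedge dx_3$, together with $\omega_{\mathcal H}\wedge\eta=\omega_{\mathcal H}(X)\,dx_1\wedge dx_2\wedge dx_3$. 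Up to that point your route is a cleaner, coordinate-free version of the same argument, and note that integrability and the special ODE form of $X$ play no role in it.

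The genuine gap is everything after that. You reinterpret the statement as asserting $\mathcal F_4=\mathcal F_3$ (i.e.\ $a\equiv 0$ in $\omega_4=a\,\omega_2+b\,\omega_3$), which is strictly stronger than what the paper claims or proves, and it is false in general: if $X$ admits two independent first integrals $F,G$ (cf.\ Example \ref{ex: nonsingflag}), then $dF$, $dG$ and $d(F+G)$ define three pairwise functionally independent integrable foliations all tangent to $X$. Consistently with this, your own reduction leads only to the transport equation $X(a)=-a\,(\alpha_2-\alpha_3)(X)$, whose solution space is parametrized by arbitrary values on a transversal to the flow, so nothing forces $a\equiv 0$. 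The appeal to ``Riccati rigidity'' of the canonical basis $\omega_2^{0}=dx_1-x_2\,dx_3$, $\omega_3^{0}=Q\,dx_2-P\,dx_3$ and to the condition $P(0)=Q(0)=0$ is asserted but never carried out, and you acknowledge as much; as it stands this step cannot be completed because the conclusion it targets is not true. The correct reading of the proposition, matching the paper's proof, is that any further foliation tangent to $X$ is functionally \emph{dependent} on the pair $(\mathcal F_2,\mathcal F_3)$, i.e.\ the vanishing of the triple wedge product --- which you had already established in your opening paragraph.
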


\begin{proof}

Let $X = x_2Q\frac{\partial}{\partial x_1} + P\frac{\partial}{\partial x_2} + Q\frac{\partial}{\partial x_3}$ and $\omega = \sum_{j=1}^{3}A_j dx_j$ be the representatives of $\mathcal{F}_1$ and $\mathcal{F}_2$, respectively. First, note that
\begin{equation}\label{eq: quocientPQ222}
\frac{P}{Q}=- \Big(\frac{x_2A_1+A_3}{A_2}\Big).
\end{equation}
Then, suppose that there exists a germ $\mathcal{F}_3$ of codimension one holomorphic foliation at $(\mathbb{C}^3,0)$ which is functionally independent of  $\mathcal{F}_2$ such that $X$ is again tangent to $\mathcal{F}_3$.  Consider an integrable 1-form $\omega_{\mathcal{F}_3}$ representing $\mathcal{F}_3$ on $U$ where we may write $\omega_{\mathcal{F}_3}=B_1dx_1+B_2dx_2+B_3dx_3$, for some holomorphic functions $B_j \in \mathcal{O}(U)$. It follows from Proposition \ref{proposition: first obstruction} that $B_2\not\equiv 0$. Then, since $Q\not\equiv 0$ we can write 

\begin{equation}\label{eq: quocientPQ2}
\frac{P}{Q}=- \Big(\frac{x_2B_1+B_3}{B_2}\Big).
\end{equation}
Equations \eqref{eq: quocientPQ222} and \eqref{eq: quocientPQ2} imply $$QA_3=-(x_2A_1Q+A_2P),$$ $$QB_3=-(x_2B_1Q+B_2P),$$
$$(x_2A_1+A_3)B_2=(x_2B_1+B_3)A_2,$$
and $$A_2B_3-B_2A_3=x_2(A_1B_2-A_2B_1).$$  Substituting these equations in $\omega\wedge \omega_{\mathcal{F}_3}$, we obtain
\begin{equation*}
	\begin{split}
		Q\omega\wedge \omega_{\mathcal{F}_3} = & Q(A_1B_2-A_2B_1)dx_1\wedge dx_2 +(A_2QB_3-QA_3B_2)dx_2\wedge dx_3+Q(A_1B_3-A_3B_1)dx_1\wedge dx_3 \\
		= & Q(A_1B_2-A_2B_1)dx_1\wedge dx_2 +x_2Q(A_1B_2-A_2B_1)dx_2\wedge dx_3 \\
		& +  P(A_{2}B_{1} - A_{1}B_{2})dx_1\wedge dx_3\\
		= & (A_1B_2-A_2B_1)(Qdx_1\wedge dx_2 +x_2Qdx_2\wedge dx_3-Pdx_1\wedge dx_3) \\
	\end{split}
\end{equation*}

\noindent that we may write as a short form

\begin{equation}\label{eq: omega12}
		Q\omega\wedge \omega_{\mathcal{F}_3}= (A_1B_2-A_2B_1) \eta,
\end{equation}
where $\eta = Qdx_1\wedge dx_2 +x_2Qdx_2\wedge dx_3-Pdx_1\wedge dx_3$. Now consider another germ of codimension one holomorphic foliation $\mathcal{H}$ at $(\mathbb{C}^3,0)$. Let $\omega_\mathcal{H}$ be an integrable $1$-form representing $\mathcal{H}$ on $U$ such that $\omega_\mathcal{H} = \sum_{j=1}^{3}C_jdx_j$. Then, we have
\begin{equation}\label{eq: omega123}
	\begin{split}
		Q\omega\wedge \omega_{\mathcal{F}_3}\wedge \omega_\mathcal{H} &= (A_1B_2-A_2B_1) \eta \wedge \omega_\mathcal{H}.
	\end{split}
\end{equation}
We also have
\begin{equation}\label{eq: omega3eta}
	\begin{split}
		\omega_\mathcal{H}\wedge \eta &= C_3Qdx_3\wedge dx_1\wedge dx_2 + x_2QC_1 dx_1\wedge dx_2\wedge dx_3 -PC_2dx_2\wedge dx_1\wedge dx_3 \\
		&= (x_2QC_1 + PC_2 + QC_3)dx_1\wedge dx_2\wedge dx_3 \\
		&=\omega_\mathcal{H}(X) dx_1\wedge dx_2 \wedge dx_3.
	\end{split}
\end{equation}
If $X$ is tangent to $\mathcal{H}$, then it follows from \eqref{eq: omega12}, \eqref{eq: omega123} and \eqref{eq: omega3eta} that $\omega\wedge\omega_{\mathcal{F}_3}\wedge \omega_\mathcal{H}\equiv 0$. 
\end{proof}

\section{Proof of Theorem \ref{Main result}}

\begin{proof}
	Let us recall some notations. Since the foliation $\mathcal{F}_1$ is associated with a second-order ordinary differential equation, it can be determined by a germ of the vector field $X$ at $(\mathbb{C}^3,0)$ represented in an open set $U\subset \mathbb{C}^3$ by
\[
X= x_2Q(x_1,x_2,x_3)\frac{\partial}{\partial x_1} + P(x_1,x_2,x_3)\frac{\partial}{\partial x_2}+Q(x_1,x_2,x_3)\frac{\partial}{\partial x_3},
\]
where $P,Q\in \mathcal{O}(U)$. The foliation $\mathcal{F}_2$ is a germ of codimension one holomorphic foliation at $(\mathbb{C}^3,0)$ and which can be represented by a germ of integrable $1$-form $\omega$ represented in $U$ by $\omega = \sum_{j=1}^3A_jdx_j$.  Note that $\mathcal{F}_2$ can be regular, that is, $\Sing(\mathcal{F}_2)=\emptyset$. In this case, tautologically, we have the inclusion. Suppose that $\Sing(\mathcal{F}_2)$
	is an isolated set. Without loss of generality, we can assume that $\Sing(\mathcal{F}_2) = \{0\}$. It follows from \cite{flagsMol} Corollary 1, that  $\Sing(\mathcal{F}_2) = \{0\}\subset \Sing(\mathcal{F}_1)$. 
Finally, suppose that $\Sing(\mathcal{F}_2)$ is non-isolated. It follows from Proposition \ref{proposition: first obstruction} that $A_2\not\equiv 0$. Now, define the vector field $Y$ given by
	\begin{equation*}
		Y = x_2A_2(x_1,x_2,x_3)\frac{\partial }{\partial x_1} -\left(x_2A_1(x_1,x_2,x_3)+ A_3(x_1,x_2,x_3)\right)\frac{\partial }{\partial x_2} +A_2(x_1,x_2,x_3)\frac{\partial }{\partial x_3}. 
	\end{equation*}
	
	\noindent By a straightforward computation, it is possible to show that $\omega(Y)\equiv 0$. This means that $Y$ is also tangent to $\mathcal{F}_2$. It follows from Proposition \ref{thm 10.1} that $Y$ and $X$ determine the same germs of holomorphic foliation $\mathcal{F}_1$. However, the vector field $Y$ can have some superfluous components in its singular set. Indeed, if we suppose that $A_2$ and $x_2A_1+A_3$ does not have any common factors then \begin{equation*}
		\begin{split}
			\Sing(\mathcal{F}_1)= \text{    } &\ \Big\{(x_1,x_2,x_3)\in U: x_2A_1(x_1,x_2,x_3)=-A_3(x_1,x_2,x_3) \ \mbox{and} \ A_2(x_1,x_2,x_3)=0 \Big\},\\
		\end{split}
	\end{equation*}
	which implies $\Sing(\mathcal{F}_2)\subseteq \Sing(\mathcal{F}_1)$. Now, suppose that $A_2$ and $x_2A_1+A_3$ have common factors. Then, there are functions $h,b_1,b_2:U\subset \mathbb{C}^3\to \mathbb{C}$ such that $A_2=hb_2$ and $x_2A_1+A_3=hb_1$. Since $\mathcal{F}_1$ is reduced, it follows from Proposition \ref{pro_reduced} that $\mathcal{F}_1$ is determined by a vector field given by $Z=x_2b_2(x_1,x_2,x_3)\frac{\partial }{\partial x_1}-b_1(x_1,x_2,x_3)\frac{\partial }{\partial x_2}+b_2(x_1,x_2,x_3)\frac{\partial }{\partial x_3}$. In this case we also have $\Sing(\mathcal{F}_2)\subseteq \Sing(\mathcal{F}_1)$.
\end{proof}

\section{Proof of Theorem \ref{theo: B}}

\begin{proof}
Since $\codSing(\mathcal{F}_2)\neq 2$ and we are considering reduced foliations (see Proposition \ref{pro_reduced}) then we have only two cases: either $\Sing(\mathcal{F}_2)=\emptyset$, or $\Sing(\mathcal{F}_2)=\{0\}$. In the former, there exist an analytic coordinate system $(y_1,y_2,y_3)$, and a representative open subset $V\ni 0$ of $(\mathbb{C}^3,0)$, such that $\omega=hdy_3$ in $V$, for some holomorphic function $h$, which does not vanish on $V$. Furthermore, there are holomorphic functions $f_j$ such that we can represent $X=\sum_{j=1}^{3}f_j(y_1,y_2,y_3)\frac{\partial}{\partial y_j}$ on $V$. Note that  $\omega(X)=0$, also holds on $V$. Hence, $$0=\omega(X) = 0\cdot f_1 + 0\cdot f_2 + hf_3 \equiv 0 \Rightarrow h\cdot f_3 \equiv 0.$$ Since $h\neq 0$ on $V$ then, $f_3\equiv 0$, and  $$X=f_1(y_1,y_2,y_3)\frac{\partial}{\partial{y_1}}+f_2(y_1,y_2,y_3)\frac{\partial}{\partial y_2}.$$ Write $V:=V_1\times V_2 \times V_3 \subset \mathbb{C}^3$, where each $V_j$ is an open subset of $\mathbb{C}$ with $0\in V_j$. For each fixed $\alpha\in V_3$, consider the family of vector fields $\{X_\alpha\}_{\alpha\in V_3}$ on $V_1\times V_2 \times \{\alpha\}$, given by $$X_\alpha:=f_1(y_1,y_2,\alpha)\frac{\partial}{\partial_{y_1}}+f_2(y_1,y_2,\alpha)\frac{\partial}{\partial_{y_2}}.$$ Each leaf $L_\alpha$ of the foliation determined by $\omega$ in $V$ is given by $L_\alpha = \{(y_1,y_2,y_3 ) \in V : y_3=\alpha \}$. Hence, for each $\alpha\in V_3$, $X_\alpha$ defines a planar vector field on $L_\alpha $.  We claim that each connected component of $\Sing(X)$ cannot be entirely contained in any leaf of $\mathcal{F}_2$. Indeed, let $C\subset \Sing(X)$ be a connected component of the singular set of $\mathcal{F}_1$ such $C\subset L_{\alpha}$, for some $\alpha\in V_3$. Then, $f_1(C)=0=f_2(C)$ in $L_{\alpha}$. However, since $X_{\alpha}$ defines a planar vector field at $L_\alpha \approx \mathbb{C}^2$, this would imply that $X_\alpha$ is identically null on $L_\alpha$. This proves the claim. Hence, for each leaf $L_\alpha$, either $\Sing(X)\cap L_\alpha =\emptyset$ or $\Sing(X)\cap L_\alpha $ is a set with isolated points. If $L_\alpha$ is a leaf such that $\Sing(X)\cap L_\alpha =\emptyset$ then $X_\alpha$ defines a nonsingular planar vector field on $L_\alpha$. Now suppose that $\Sing(X)\cap L_\alpha$ is a set with isolated points. For each $p\in \Sing(X)\cap L_\alpha$ take an open set $W_p\subset L_\alpha$ such that $\Sing(X_\alpha) \cap W_p =\{p\}$. Therefore, $X_\alpha$ induces a planar vector field on $W_p$ with an isolated singular set. Define the distribution given by the 1-form $\omega = f_2(y_1,y_2,y_3) dy_1 - f_1(y_1,y_2,y_3)dy_2$ on $V$. For each $\alpha$ we have $$\omega_\alpha = f_2(y_1,y_2,\alpha) dy_1 - f_1(y_1,y_2,\alpha)dy_2$$ as a deformation on $(\mathbb{C}^2,0)\times (\mathbb{C},0)$. In particular, we have an associated first-order ordinary differential equation given by $$\frac{dy_2}{dy_1} = \frac{f_1(y_1,y_2,\alpha)}{f_2(y_1,y_2,\alpha)}.$$

	\noindent Now, suppose that $ \Sing(\mathcal{F}_2) = \{0\}$. It follows from Malgrange's Theorem \cite{MalgrangeI}, that there exist germs of holomorphic functions $G$ and $F$ at  $(\mathbb{C}^3,0)$ such that $G\neq 0$. Note that $\nabla F (p)= \left(\frac{\partial F}{\partial x_1}(p),\frac{\partial F}{\partial x_2}(p),\frac{\partial F}{\partial x_3}(p)\right)=0 $ if, and only if, $p=0$. In particular, since $\Sing(\mathcal{F}_2)=\{0\}$ then, $F$ has an isolated critical point at $0$.  
	Define a germ of vector field $Y$ at $ (\mathbb{C}^3,0)$ that can be represented by
	\[
	Y=\left(x_2\frac{\partial F}{\partial x_2}\right)\frac{\partial}{\partial x_1} - \left(x_2\frac{\partial F}{\partial x_1}+\frac{\partial F}{\partial x_3}\right)\frac{\partial}{\partial x_2} + \left(\frac{\partial F}{\partial x_j}\right)\frac{\partial}{\partial x_3}.
	\]
	Let us denote by $\mathcal{F}_Y$ the germ of  one-dimensional foliation determined by $Y$.  
	It is immediate to verify that $\omega(Y)\equiv 0$. Therefore $(\mathcal{F}_Y,\mathcal{F}_2)$ is a $2$-flag of a second-order ordinary differential equation. It follows from Propositions \ref{proposition: first obstruction} and \ref{thm 10.1} that $\mathcal{F}_1 = \mathcal{F}_Y$. In particular, if $u$ is a solution of the second-order ordinary differential equation then it satisfies $F(u,u',t)= 0$.

	\noindent To conclude the proof, consider the $2$-flag of a second-order ordinary differential equation $(\mathcal{F}_1,\mathcal{F}_2)$, where $\mathcal{F}_2$ is regular, that is, $\Sing(\mathcal{F}_2)=\emptyset$. Assume that there exists another $2$-flag $(\mathcal{F}_1,\mathcal{F}_3)$ such that $\Sing(\mathcal{F}_3)=\{0\}$. As above, there are a coordinate system $(y_1,y_2,y_3)$ and an open set $V\ni 0$, such that $\mathcal{F}_1$ can be represented by
	\[
	X = f_1\frac{\partial }{\partial y_1} + f_2 \frac{\partial }{\partial y_2}, 
	\] 
	and $\mathcal{F}_2$ can be given by $\omega=dy_3$. Also, as we did previously, there exist germs of functions $F,G : (\mathbb{C}^3,0)\to (\mathbb{C},0)$, with $G\neq 0 $ such that $\mathcal{F}_3$ can be represented by $\eta = GdF$. Now, define the vector field given by
	\[
	Y = \frac{\partial F}{\partial y_2}\frac{\partial }{\partial y_1} - \frac{\partial F}{\partial y_1}  \frac{\partial }{\partial y_2}.
	\]
	Since $dF(X)\equiv 0$, we have
	\[
	X\wedge Y = -\left( f_1\frac{\partial F}{\partial y_1} + f_2 \frac{\partial F}{\partial y_2}\right) \equiv 0.
	\]
	This implies that $X$ and $Y$ are linearly dependent and they then determine the same foliation $\mathcal{F}_1$. On the other hand, since $\Sing(dF)= \{0\}$, we have that $\frac{\partial F}{\partial y_1}(p )= \frac{\partial F}{\partial y_2}(p)=0$ if, and only if, $p=0$. In particular, $\Sing(Y) = \{0\}$. However,  $\Sing(\mathcal{F}_1)$ is a non-isolated set. Therefore, the two cases $(i)$ and $(ii)$ cannot occur simultaneously.
\end{proof}

\section{Examples} 

\begin{remark} Note that Examples \ref{example: linearHILL}, \ref{example: Emden-Fowler}, \ref{example: firstNONlinear}, and \ref{example: corPropQuasilinear} satisfy the inclusion \eqref{eq: inclusion11} in Theorem \ref{Main result}.
\end{remark}

\begin{example}\label{ex: formISOLATEDsing}
	Consider the equation 
	\begin{equation}\label{eq: ex2}
		u''(t)=-\frac{u'(t)^2+t}{u(t)}
	\end{equation}
	defined on a domain $T\subset \mathbb{C}$ containing the origin $0\in \mathbb{C}$. Let $U\subset \mathbb{C}^2\times T$ be an open set of $\mathbb{C}^3$ containing the origin $0\in \mathbb{C}^3$.  Equation \eqref{eq: ex2} is an example of the case $(i)$  in Theorem \eqref{theo: B}. In fact, fix an analytic coordinate system $(x_1,x_2,x_3)$ on $U$. The germ of one-dimensional holomorphic foliation  $\mathcal{F}_1$ associated with \eqref{eq: ex2} can be determined by the vector field given 
	\begin{equation*}
		X= x_1 x_2\frac{\partial}{\partial x_1} -( x_2^2 + x_3)\frac{\partial}{\partial x_2} + x_1\frac{\partial}{\partial x_3}.
	\end{equation*}
	Define the 1-form $\omega = x_2dx_1+x_1dx_2+x_3dx_3$. Then $\omega$ is integrable and satisfies $\omega(X)=0$ with $\Sing(\omega)=\{0\}$. Note that $\Sing(\omega) \subset \Sing(X)$. Furthermore, $F= x_1 x_2 + \frac{x_3^2}{2}$ determine a germ of first integral for $\omega$ and $X$.  In particular, equation \eqref{eq: ex2} can be reduced to an implicit first order ordinary equation. 
\end{example}

\begin{example}\label{ex: sing totalderivative tang to regular fol}
	Consider the equation 
	\begin{equation}\label{eq: nonsingFORM}
		u''(t)=-\frac{t+u'(t)}{u'(t)(t^2+2u(t))},
	\end{equation}
	defined on a domain $T\subset \mathbb{C}$ containing the origin $0\in \mathbb{C}$. Let $U\subset \mathbb{C}^2\times T$ be an open set of $\mathbb{C}^3$ containing the origin $0\in \mathbb{C}^3$. Fix an analytic coordinate system $(x_1,x_2,x_3)$ on $U$. The one-dimensional holomorphic foliation  $\mathcal{F}_1$ associated with \eqref{eq: nonsingFORM} can be  determined by the vector field 
	\begin{equation*}
		X= x_2^{2}(x_3^2+2x_1)\frac{\partial}{\partial x_1}- (x_2+x_3)\frac{\partial}{\partial x_2} +(x_2x_3^2+2x_1x_2)\frac{\partial}{\partial x_3}. 
	\end{equation*}
	Define the $1$-form $
		\omega = dx_1 + x_2(x_3^2+2x_1)dx_2+x_3dx_3.$ We have $\omega \wedge d\omega =0 $ and $\omega(X)=0$. Denote by $\mathcal{F}_2$ the codimension one holomorphic foliation determined by $\omega$. The pair $\mathcal{F} = (\mathcal{F}_1,\mathcal{F}_2)$ is a $2$-flag of a second-order ordinary differential equation (associated with \eqref{eq: nonsingFORM}). Therefore, this is the case $(ii)$  in Theorem \eqref{theo: B}. Moreover, we have $\Sing(\mathcal{F}_2)\subset \Sing(\mathcal{F}_1)$.
\end{example}

\begin{example}\label{ex: StrictInclusion}
	Consider a $2$-flag of a second-order ordinary differential equation, as given by Proposition \ref{Prop: Quasilinear ode}. In this case, we observe that $\Sing(\mathcal{F}_2)\varsubsetneq\Sing(\mathcal{F}_1)$, indicating a strict inclusion between the singular sets of the holomorphic foliations $\mathcal{F}_1$ and $\mathcal{F}_2$.
\end{example}

We conclude by emphasizing that the inclusion \eqref{eq: inclusion11} in Theorem \ref{Main result}  is a distinctive property  of $2$-flags of a second-order ordinary differential equation.  
\begin{example}\label{ex: noinclusion} 
	Let $\mathcal{G}_1$ be a germ of one-dimensional holomorphic foliation with a representative determined by the linear vector field 
	\[Y= x_1 \frac{\partial}{\partial x_1}+  x_2\frac{\partial}{\partial x_2}+  x_3\frac{\partial}{\partial x_3}\]
	on an open set $U\subset \mathbb{C}^3$, with an coordinate system $(x_1,x_2,x_3)$ such that $0\in U$. Furthermore, consider the germ of codimension one holomorphic foliation $\mathcal{G}_2$ at $ (\mathbb{C}^3,0)$ determined, on $U$, by the integrable $1$-form
	\[\omega=x_2 x_3 dx_1+x_1 x_3 dx_2 -2 x_1 x_2 dx_3.\]
	Note that $(\mathcal{G}_1, \mathcal{G}_2)$ is a $2$-flag of holomorphic foliation (although it is not a $2$-flag of a second-order ordinary differential equation). However, $\Sing(\mathcal{G}_2) \not\subset \Sing(\mathcal{G}_1) = {0}$.

	As an application of Corollary \ref{Corollary: 01}, we can construct an one-dimensional foliation $\mathcal{F}_1$ determined by
	\[
	X = x_1x_2x_3 \frac{\partial}{\partial x_1} -  (x_2^2x_3-2x_1x_2)\frac{\partial}{\partial x_2}+  x_1x_3\frac{\partial}{\partial x_3}
	\]
	such that $(\mathcal{F}_1,\mathcal{G}_2)$ is a $2$-flag of a second-order ordinary differential equation satisfying
	\[ \Sing(\mathcal{G}_2)\subseteq \Sing(\mathcal{F}_1).
	\]
\end{example}

\section{Proof of Corollary \ref{Corollary: 01}}
Let $U\subset \mathbb{C}^n$ be an open set and an analytic coordinate system $(x_1,\dots, x_n)$, where $\mathcal{G}$ is given by the integrable 1-form $\omega=\sum_{j=1}^{n} A_j dx_j$. Define the vector field given by
\[
X=\sum_{j=1}^{n-2}x_{j+1}A_{n-1}\frac{\partial}{\partial x_j }-\left(A_n+\sum_{j=1}^{n-2}x_{j+1}A_j\right)\frac{\partial}{\partial x_{n-1}}+ A_{n-1}\frac{\partial}{\partial x_n},
\]
on $U$. If $A_{n-1}$ and $-\left(A_n+\sum_{j=1}^{n-2}x_{j+1}A_j \right)$ have a common factor, then there are holomorphic functions $f(x_1,x_2,x_3), B_{n-1},B_{n}$ such that $A_{n-1} = fB_{n-1}$ and $\left(A_n+\sum_{j=1}^{n-2}x_{j+1}A_j \right) = fB_n$. Now, we can write 
$
X= f Y
$,
where 
$$Y = \sum_{j=1}^{n-2}x_{j+1}B_{n-1} \frac{\partial}{\partial x_j} + B_n\frac{\partial}{\partial x_{n-1}} + B_{n-1}\frac{\partial}{\partial x_n}.$$ 
Then, we can define the foliation $\mathcal{F}_U$, on  $U$, as the one-dimensional holomorphic foliation given by $X$ if there are no common factors or $Y$, otherwise. It is immediate that, $(\mathcal{F}_U,\mathcal{G})$ is a $2$-flag on $U$. To conclude, consider the germ of one-dimensional holomorphic foliation $\mathcal{F}$, with representative $\mathcal{F}_U$.

\section{Proof of Corollary \ref{Corollary: 02}}
	Let $\mathcal{F}_X$ be the germ of  one-dimensional holomorphic foliation at $(\mathbb{C}^n,0)$, $n\geq 3$, intrinsically associated with equation \eqref{eq: n-order}, that is, the foliation determined by the germ of the vector field $X$ represented by
	\[
	X =\sum_{j=1}^{n-2}z_{j+1}Q\frac{\partial}{\partial z_j } + P\frac{\partial}{\partial z_{n-1}}+ Q\frac{\partial}{\partial z_n},
	\]
in an analytic coordinate system $(z_1,\dots,z_n)$ on $U\subset \mathbb{C}^n$. Note that $\cod(\Sing(\mathcal{F}_X) ) = 2$. Let $DX(0)$ the linear part of $X$ at $0\in U$. From a straightforward computation we can obtain that $DX(0)$ has two nonzero eigenvalues given by $$\lambda_1 = \frac{b_n}{2} + \frac{a_{n-1}}{2} + \frac{\sqrt{a_{n-1}^2-2 a_{n-1}b_n +4a_nb_{n-1} +b_n^2 }}{2}$$ and $$\lambda_2 = \frac{b_n}{2} + \frac{a_{n-1}}{2} - \frac{\sqrt{a_{n-1}^2-2 a_{n-1}b_n +4a_nb_{n-1} +b_n^2 }}{2},$$ 
where $a_j=\frac{\partial P}{\partial z_j}(0)$ and $b_j = \frac{\partial Q}{\partial z_j}(0)$. By hypothesis, we have that $a_{n-1} = -b_{n}$. This implies $\frac{\lambda_1}{\lambda_2}=-1$. It follows from  \cite{bodadila} (see Theorems 1.6 and 1.9) that $X$ can be transformed by a formal change of coordinates in a germ of a vector field $Z$ at $(\mathbb{C}^n,0)$ determined by
\[
Z =  -z_1\frac{\partial}{\partial z_1} + z_2\frac{\partial}{\partial z_2}+z_1z_2\frac{\partial}{\partial z_3}.
\]  
Consider the projection $\pi:\mathbb{C}^n\to \mathbb{C}^3$ given by $\pi(z_1,z_2,z_3,\dots,z_n) = (z_1,z_2,z_3)$, and denote by $\mathcal{F}_1$ the one-dimensional holomorphic foliation determine by the projection of $Z$ by $\pi$, denote by $$Y= -x_1\frac{\partial}{\partial x_1} + x_2\frac{\partial}{\partial x_2}+x_1x_2\frac{\partial}{\partial x_3}$$ on $V:=\pi(U)\subset \mathbb{C}^3$. Now, define the $1$-form 
\[
\Omega =  -x_2(x_1x_2-x_3)dx_1+x_1x_3 dx_2-x_1x_2dx_3,
\]
on $V$. Then, $\Omega(Y) =  -x_2(x_1x_2-x_3)(-x_1)+x_1x_3 (x_2)-x_1x_2(x_1 x_2) \equiv 0$. Furthermore, $d\Omega=2x_1x_2dx_1\wedge dx_2 - 2x_2dx_1\wedge dx_3-2x_1dx_2\wedge dx_3$ and \[\Omega \wedge d\Omega = 2x_1x_2(x_1x_2-x_3)dx_1\wedge dx_2\wedge dx_3 - 2x_1x_2x_3dx_2\wedge dx_1\wedge dx_3 -2x_1^2x_2^2dx_3\wedge dx_1\wedge dx_2 \equiv 0 .\] 

This implies that $\Omega$ determines a germ of codimension one holomorphic foliation $\mathcal{F}_2$ at $(\mathbb{C}^3,0)$, such that $(\mathcal{F}_1,\mathcal{F}_2)$ is a germ of $2$-flag of holomorphic foliations at $(\mathbb{C}^3,0)$.

\vspace{0.5 cm}

\vspace{0.5 cm}

\subsection*{Acknowledgments}  The second author was  partially supported by the FAPEMIG [grant number 38155289/2021].

\end{document}